\documentclass[reqno,a4paper,12pt]{amsart} 
\usepackage{amsmath,amscd,amsfonts,amssymb}
\usepackage{mathrsfs,dsfont}
\usepackage{enumerate}
\usepackage{hyperref}

\numberwithin{equation}{section}
\numberwithin{figure}{section}

\def\R{\mathbb{R}}

\def\Z{\mathbb{Z}}

\def\T{\mathbb{T}}

\def\D{\mathcal{D}}

\def\1{\mathds{1}}

\def\dH{\dim_{\mathcal{H}}}

\newcommand{\RD}{\text{RapDec}}

\renewcommand\leq{\leqslant}
\renewcommand\geq{\geqslant}

\renewcommand\hat{\widehat}

\newcommand{\supp}{\operatorname{supp}}

\newcommand{\dist}{\operatorname{dist}}

\theoremstyle{plain}
\newtheorem{thm}{Theorem}[section]
\newtheorem{lem}[thm]{Lemma}

\newtheorem{prop}[thm]{Proposition}

\newtheorem*{claim*}{Claim}
\newtheorem*{thm*}{Theorem}

\theoremstyle{definition}

\newtheorem*{definition*}{Definition}
\newtheorem*{remarks*}{Remarks}
\newtheorem*{remark*}{Remark}

\newenvironment{enumerate-math}
{\begin{enumerate}
\addtolength{\itemsep}{5pt}
}
{\end{enumerate}}

\newenvironment{enumerate-text}
{\begin{enumerate}
\addtolength{\itemsep}{5pt}
}
{\end{enumerate}}

\begin{document}

\title[Pinned distance set and Mizohata-Takeuchi-type estimates]{Lebesgue measure of distance sets with regular pins and multi-scale Mizohata-Takeuchi-type estimates}

\author{Bochen Liu}
\email{Bochen.Liu1989@gmail.com}
\address{Department of Mathematics \& International Center for Mathematics, Southern University of Science and Technology, Shenzhen 518055, China}

\thanks{This work is supported by the National Key R\&D
Program of China 2024YFA1015400, and the National Natural Science Foundation of
China grant 12131011.}
\date{}


\begin{abstract}
Suppose $E, F$ are Borel sets in the plane, $\dim_{\mathcal{H}} E>1$, $\dim_{\mathcal{H}} E+\dim_{\mathcal{H}} F>2$, and $F$ has equal Hausdorff and packing dimension. We prove that there exists $y\in F$ such that the pinned distance set
$$\Delta_y(E):=\{|x-y|:x\in E\}$$
has positive Lebesgue measure. In particular, it settles the regular case of the distance set problem in the plane. The main ingredients of the proof consist of a multi-scale Good-Bad decomposition and a multi-scale Mizohata-Takeuchi-type estimate with arbitrary small power-loss.
\end{abstract}
\maketitle

\tableofcontents

\section{Introduction}
\subsection{Falconer distance conjecture}

Falconer distance conjecture \cite{Fal85} is one of the most famous open problems in harmonic analysis and geometric measure theory since 1985. It states that, if a Borel set $E\subset\R^d$, $d\geq 2$, has Hausdorff dimension $\dH E> d/2$, then its distance set
$$\Delta(E):=\{|x-y|: x,y\in E\}$$
has positive Lebesgue measure (denoted by $|\Delta(E)|>0$). A stronger version is the pinned distance set problem, first studied by Peres and Schlag \cite{PS00} in 2000, that asks whether there exists $y\in E$ such that the pinned distance set
$$\Delta_y(E):=\{|x-y|: x\in E\}$$
has positive Lebesgue measure when $\dH E>d/2$.

With more than 40 years passed, accumulating efforts of many excellent mathematicians, Falconer distance conjecture is still open in every dimension. So far, there are two perspectives to study this problem. One is to investigate how large $\dH E$ needs to be to ensure $\Delta(E)$ or $\Delta_y(E)$ has positive Lebesgue measure. Falconer's first result \cite{Fal85} in 1985 is of this type, which is a $L^\infty$-estimate in today's point of view. In 1987, Mattila \cite{Mat87} proposed his $L^2$-approach for $|\Delta(E)|$, now called the Mattila integral, that reduced the problem to spherical averaging operators. Since then, Bourgain \cite{Bou94}, Wolff \cite{Wol99}, Erdogan \cite{Erd05}, Du-Guth-Ou-Wang-Wilson-Zhang \cite{DGOWWZ21}, Du-Zhang \cite{DZ18} have used the newly developed tools in harmonic analysis at the time to improve spherical averaging estimates for partial results on Falconer distance conjecture. 

In 2019, the author \cite{Liu18} proposed an $L^2$-approach for the pinned distance set problem. Shortly after, Guth, Iosevich, Ou and Wang \cite{GIOW18} proved the current best-known result on Falconer distance conjecture in the plane:
$$\dH E>5/4\implies |\Delta_y(E)|>0, \text{ for some }y\in E,$$
building on the author's approach and an insightful use of the decoupling theory. Their argument in fact shows 
$$\dH F>1,\  3\dH E+ \dH F> 5\implies |\Delta_y(E)|>0, \text{ for some }y\in F.$$
We shall review their proof strategy in Section \ref{subsec-Good-Bad-GIOW}. This result was later extended to all even dimensions by Du-Iosevich-Ou-Wang-Zhang \cite{DIOWZ20}. Following the same idea with more careful analysis, Du-Ou-Ren-Zhang \cite{DORZ23-2}\cite{DORZ23-1} obtained the current best-known results in dimension $3$ and higher. 

The other perspective on Falconer distance conjecture is to compute the Hausdorff dimension of $\Delta(E), \Delta_y(E)$ assuming $\dH E> d/2$. The first nontrivial result on $\Delta(E)$ is due to Katz-Tao \cite{KT01} and Bourgain \cite{Bou03} from discretized sum-product estimates. Its pinned version is due to Shmerkin \cite{Shm20}. The first nontrivial explicit dimensional exponent is due to Keleti and Shmerkin \cite{KS18}, who proposed the novel idea of scale-selection. A partial result based on estimates of Guth-Iosevich-Ou-Wang was given by the author \cite{Liu18-dimension}, that was later extended to all even dimensions by Wang-Zheng \cite{WZ22}.

 The current record in the plane is due to Stull \cite{Stull22+}, who proved 
 $$\dH E>1\implies \dH \Delta_y(E)\geq \frac{3}{4}, \text{ for some }y\in E.$$
 His method is beyond the author's understanding (as well as the recent work of Fiedler-Stull \cite{FS23+}\cite{FS25+}), but according to Hong Wang (personal communication), she and Shmerkin understand the key idea of Stull \cite{Stull22+} and are able to obtain the same dimensional exponent $\frac{3}{4}$ via incidence estimates. We also refer to the recent work of Shmerkin and Wang \cite{SW25} for partial results on the endpoint case $\dH E\geq d/2$ for all $d\geq 2$.
 
 Recently, the less well-known Assouad dimension $\dim_A$ has drawn much attention. The problem on $\dim_A\Delta(E)$ in terms of $\dim_A E$ has been solved by Fraser in the plane \cite{Fraser18}\cite{Fraser23}. There is also a very recent work of Fraser-Pham \cite{FP26+} on the relation between the Fourier dimension of $E$ and the Hausdorff dimension of $\Delta(E)$.
 
 Though open in general, the distance set problem is better understood on sets with regularity. B\'ar\'any \cite{Barany17}, Orponen \cite{Orponen12} studied self-similar sets, Ferguson-Fraser-Sahlsten \cite{FFS15} studied self-affine sets, Fraser-Pollicott \cite{FP15} studied self-conformal sets. A breakthrough in bypassing dynamical systems was Orponen's work \cite{Orp17} on Ahlfors-David regular sets, and the idea was further developed by Shmerkin \cite{shm17}\cite{Shm18}, Keleti-Shmerkin \cite{KS18}. The final version can be stated as the following: suppose $E, F$ are Borel sets in the plane, then
 $$\dH F>1, \ \dH E=\dim_P E>1\implies \dH \Delta_y(E)=1, \text{ for some }y\in E,$$ where $\dim_P$ denotes the packing dimension (see Section \ref{sec-prelim} below for definition). In particular, if $E\subset\R^2$ has equal Hausdorff and packing dimension $>1$, then $\dH \Delta_y(E)=1$ for some $y\in E$.

``$\dH \Delta(E)=1$" is exciting and already answers the original question of Falconer in \cite{Fal85}, but it is still weaker than ``positive Lebesgue measure". There is very little discussion on Lebesgue measure of distance sets of regular sets in the literature. The only work we know is due to Iosevich and the author \cite{IL16} on $|\Delta(A_1\times A_2)|$ with $A_1, A_2\subset\R$, Ahlfors-David regular, $\dim A_1+\dim A_2>\frac{4}{3}-\epsilon_0$.

 In this paper, we settle the regular case of the distance set problem in the plane.

\begin{thm}
	\label{thm-main}
	Suppose $E, F$ are Borel sets in the plane, $\dH E>1$, $\dH E+\dH F>2$, and $F$ has equal Hausdorff and packing dimension. Then there exists $y\in F$ such that the pinned distance set
$$\Delta_y(E):=\{|x-y|:x\in E\}$$
has positive Lebesgue measure. In particular, if $E\subset\R^2$ has equal Hausdorff and packing dimension $>1$, then $|\Delta_y(E)|>0$ for some $y\in E$.
\end{thm}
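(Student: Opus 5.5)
We follow the $L^2$ pinned-distance framework of \cite{Liu18} and \cite{GIOW18}. Choose Frostman measures $\mu_E$ on $E$ with exponent $s_E<\dH E$, $s_E>1$, and $\mu_F$ on $F$ with exponent $s_F$, where $s_E+s_F>2$. Because $F$ has equal Hausdorff and packing dimension, after restricting to a suitable compact subset we may also assume upper regularity $\mu_F(B(y,r))\lesssim r^{s_F-\epsilon}$ together with a matching lower density bound $\gtrsim r^{s_F+\epsilon}$ at most points and most scales. For each pin $y$ let $d^y_*\mu_E$ be the pushforward of $\mu_E$ under $x\mapsto|x-y|$. It suffices to show that for a set of $y$ of positive $\mu_F$-measure, $d^y_*\mu_E$ is close in $L^1$ to a function $d^y_*\mu_{E,\mathrm{good}}$ of bounded $L^2$ norm. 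Then $\Delta_y(E)$ carries a nontrivial absolutely continuous measure, so $|\Delta_y(E)|>0$.

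To build the good part we decompose $\mu_E$ into wave packets at dyadic frequency scales $R_j=2^j$, with tubes of width $R_j^{-1/2+\delta}$. As in \cite{GIOW18}, a tube is \emph{bad} if it carries too much $\mu_F$-mass, meaning more than $R_j^{-(s_F-\epsilon)/2}$ times a threshold; otherwise it is good. The new ingredient is a \emph{multi-scale} Good-Bad decomposition. Tube badness is judged relative to the regular structure of $\mu_F$ across all intermediate scales between $R_j^{-1}$ and $R_j^{-1/2}$, and not at a single scale. This yields an $L^1$ bound on the bad part,
\[
\int \|d^y_*\mu_{E,\mathrm{bad}}\|_{L^1}\,d\mu_F(y)\ \text{small}.
\]
We prove it by a tube-incidence and Fubini argument: a point $y$ sees bad tubes only rarely, because the heavy tubes cover few directions through $y$. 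Here we use $\dH E>1$, which gives $\mu_E$ positive mass on non-concentrated directions, together with $s_E+s_F>2$, via a projection/Kaufman-type bound.

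For the good part we need
\[
\int \|d^y_*\mu_{E,\mathrm{good}}\|_{L^2}^2\,d\mu_F(y)\lesssim 1 .
\]
Expanding in frequency, the piece at scale $R_j$ is essentially
\[
\int |\widehat{\mu_{E,\mathrm{good}}}\ \text{restricted to the sphere shell}|^2
\]
averaged against $\mu_F$ at scale $R_j$. This is a Mizohata--Takeuchi-type quantity: the weight $w=\mu_F*\phi_{R_j^{-1}}$ integrated against $|Ef|^2$, controlled by the sup over tubes of $w$ times $\|f\|_2^2$. Goodness bounds the tube sup. We need only an $R^\epsilon$ loss, and it must be summable after the $R_j^{-(s_E+s_F-2)}$ gain. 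Generic Mizohata--Takeuchi is false or open, so we exploit the multi-scale regularity of $w$. Using the equality of Hausdorff and packing dimension, we induct on scales: write $w$ at scale $R^{-1}$ as a union of regular pieces at scale $K^{-1}$, apply the estimate at scale $R/K$ inside each piece after parabolic rescaling, and combine the pieces with refined decoupling or an $L^2$-orthogonality argument. Each step loses only $K^{\epsilon}$.

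The main obstacle is this multi-scale Mizohata--Takeuchi estimate with arbitrarily small power loss. Our first attempt is a Bourgain--Guth-style induction on scales, in which the regularity hypothesis replaces the missing general estimate by guaranteeing that the weight looks uniform at every intermediate scale. A second delicate point is making the good/bad thresholds consistent across scales, so that the bad-part $L^1$ estimate and the good-part $L^2$ estimate both close with the same parameters. The final statement, where $E=F$ is regular with dimension $>1$, follows directly.
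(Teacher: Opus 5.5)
\textbf{There is a genuine gap.} Your outline has the right architecture: the $L^1$/$L^2$ good--bad framework, a radial projection estimate for $\mu_E$ that uses $\dH E>1$ and $s_E+s_F>2$, and a Mizohata--Takeuchi-type bound for the good part. But neither load-bearing step is actually carried out, and your hints do not pin down an argument that works.

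\textbf{The decomposition.} The multi-scale decomposition is never defined. Writing $d^y_*\mu_{E,\mathrm{good}}$ for a single measure hides an essential point: at intermediate scales, badness is a property of a (cube, cap) pair, not of a long wave packet.
\begin{enumerate}
\item In the paper, with $r_j=R^{2^j\delta}$, an $r_j^{-1}$-cap $\theta_j$ is bad for $Q_j\in\D_{r_j/R}$ when the (slightly enlarged) $r_j/R\times r_{j+1}/R$ tube through $Q_j$ in direction $\theta_j$ has mass $>R^{10\delta}(r_j/r_{j+1})^{\min\{t,1\}}$, with $\nu$ normalized to the parent cube $Q_{j+1}$.
\item Only the packets meeting $Q(y)$ in directions bad for the cubes containing $y$ are removed, so the good part $\mu_{good,y}$ depends on the pin.
\item A pin-independent rule (discard a packet if it is bad somewhere along it) would typically discard far too much, since a packet crosses many cubes of $\supp\nu$ with different bad directions. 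The $L^1$ estimate only controls pairs $(x,y)$ for which the tube centred at $y$ itself is heavy.
\item Your single-scale threshold on $R^{-1/2+\delta}\times 1$ tubes corresponds only to the last of the $\log(1/\delta)$ scales.
\end{enumerate}
Your bad-part heuristic (few heavy directions through $y$, plus non-concentration of $\mu_E$ via Orponen's estimate with the two measures swapped) does match the paper. This requires two adjustments. Heaviness must be measured relative to $\nu_Q$ for every pair of scales. Light cubes with $\nu(Q)<R^{-\delta^2}r^t$ must be discarded at \emph{every} scale, not just most; this is exactly where $\overline{\dim}_M\supp\nu<t+\delta^2$ enters.

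\textbf{The good-part estimate.} This is the heart of the theorem, and you explicitly leave it open (``our first attempt''). The tools you name are not what closes it. A single-scale tube supremum of the weight gives only the local Mizohata--Takeuchi inequality, which has a genuine power loss. What is needed is goodness at every scale relative to the parent cube. You also do not say how the localized pieces are recombined, which is precisely where long wave packets meet the weight along their whole length.

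The missing idea is an $L^2$ ball inflation, with no decoupling and no induction on scales:
\begin{enumerate}
\item Because $r_{k+1}=r_k^2$, an $r_k^{-1}$-cap of $RS^1$ sits in an $R/r_k\times R/r_{k+1}$ rectangle. Hence $F_{\theta_k}$ is locally constant on $r_k/R\times r_{k+1}/R$ tubes.
\item On each $r_k/R$-cube, local orthogonality splits the function into caps.
\item Local constancy plus goodness of the tube then replaces the weight $\nu(Q_k)$ by the average over $Q_{k+1}$, at cost $(r_{k+1}/r_k)^{1-\min\{t,1\}}$.
\item The identity $F_{Q_k,\theta_{k-1}}=\sum_{\theta_k\notin\Theta_k(Q_k),\,\theta_k\subset\theta_{k-1}}F_{Q_{k+1},\theta_k}$ makes this iterable despite the pin-dependence.
\item Telescoping gives $R^{1-\min\{t,1\}+O(\delta\log(1/\delta))}$, and Plancherel gives $R^{-1}\int_{S^1}|\hat\mu(R\sigma)|^2d\sigma$.
\end{enumerate}
The frequency-$R_i$ contribution is then $\lesssim R_i^{2-s-\min\{t,1\}+O(\delta\log(1/\delta))}I_s(\mu)$, which is summable precisely when $s>1$ and $s+t>2$.
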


Notice that there exists $E\subset\R^2$, $\dH E=\dim_P E=1$, while $|\Delta(E)|=0$. For example, one can take $E=A\times\{0\}$ with
$$A:=\bigcap_{i=1}^\infty \{a\in[0,1]:\dist(a,q_i^{-1}\Z )<q_i^{-1-\epsilon_i}\},$$
where $q_{i+1}>q_i^i>1$ and $\epsilon_i\downarrow 0$ such that $10q_i^{-\epsilon_i}\rightarrow 0$. Then $\dH A=\dim_P A=1$ (see, for example, Section 8.5 in \cite{Fal86} for $\dH A\geq 1-\epsilon_i$, $\forall\,i$, and it is well known that $\dH A\leq \dim_P A\leq 1$). Therefore $\dH E=\dim_P E=1$, while
$$|\Delta(E)|=|A-A|\leq 10q_i^{-\epsilon_i}\rightarrow 0.$$

The main ingredients of the proof of Theorem \ref{thm-main} consist of a multi-scale Good-Bad decomposition and a multi-scale Mizohata-Takeuchi-type estimate with arbitrary small power-loss. The single-scale Good-Bad decomposition was proposed by Guth-Iosevich-Ou-Wang \cite{GIOW18}. Our multi-scale Good-Bad decomposition is inspired by them, but not a direct generalization (see the discussion in Section \ref{subsec-Good-Bad}). As a result of the difference, it is the first time in recent works that $\dH F\leq 1$ is allowed for $|\Delta_y(E)|>0$ or $\dim\Delta_y(E)=1$, $y\in F$. Also, unlike the refined decoupling inequality in \cite{GIOW18}, there is no deep $L^p$ theory in this paper. We develop an $L^2$ ball inflation argument that has its own interest (see Section \ref{sec-L2}). It also leaves plenty of room for more tools to play a role in this framework.

\subsection{Mizohata-Takeuchi conjecture}
Let $\Sigma$ be a compact $C^2$ hypersurface in $\R^d$ with surface measure $\sigma$. The Mizohata-Takeuchi conjecture arose in the study of dispersive PDE from 1970s. It states that, for every non-negative weight $w$ on $\R^d$,
$$\int_{\R^d}|\widehat{f\,d\sigma}(x)|^2w(x)dx\lesssim \sup_T w(T) \int_{\Sigma} |f|^2d\sigma,$$
where $w(T):=\int_T w$ and the supremum is taken over all 1-neighborhoods of doubly-infinite lines. When $\Sigma$ is a hyperplane, it simply follows from Plancherel. On the other hand, unfortunately, it has been proved false on every hypersurface that is
not part of a hyperplane, by a recent delicate example of Cairo \cite{Cairo25+}. 
For more information on the history, related works, and discussion on the Mizohata-Takeuchi conjecture, we refer to the inspiring talk of Guth \cite{Guth22}, comprehensive introductions of Carbery-Iliopoulou-Wang \cite{CIW24}, Cairo \cite{Cairo25+}, Cairo-Zhang \cite{CZ25+}, as well as recent works of Carbery-H\"anninen-Valdimarsson \cite{CHV23}, Carbery-Li-Pang-Yung \cite{CLPY25+}, Mulherkar \cite{Mulherkar25+}, and references therein.

One way to describe how far it is from being true is to consider its local version: 
\begin{equation}\label{local-MT}\int_{B_R}|\widehat{f\,d\sigma}(x)|^2w(x)dx\leq CR^\alpha\cdot \sup_T w(T) \int_{\Sigma} |f|^2d\sigma.\end{equation}
Though the original Mizohata-Takeuchi conjecture fails in general, it is an open problem that whether \eqref{local-MT} holds for every $\alpha>0$, called the local Mizohata-Takeuchi conjecture. In \cite{CIW24}, Carbery, Iliopoulou and Wang proved \eqref{local-MT} for every $\alpha>\frac{d-1}{d+1}$ on strictly convex $C^2$ hypersurfaces. Whether $\frac{d-1}{d+1}$ is optimal is not known, but Guth \cite{Guth22} pointed out that a $R^{\frac{d-1}{d+1}-\epsilon}$-power-loss is inevitable under the usual ``wave packet decomposition axioms". See \cite{CIW24} for a detailed discussion on Guth's outline.

In a very recent work of Cairo-Zhang \cite{CZ25+}, a family of counterexamples are constructed, that in particular shows the exponent of Carbery-Iliopoulou-Wang is sharp for many strictly convex $C^2$ hypersurfaces up to the endpoint. But whether it is sharp for specific surfaces, even for familiar ones like the paraboloid or the sphere, is still unknown.

We call \eqref{local-MT} ``single-scale" as all tubes under consideration have the same size. In this paper, we find that some multi-scale Mizohata-Takeuchi-type estimates have no power-loss, up to an arbitrary small constant $\delta>0$. The one we need for Theorem \ref{thm-main} is quite complicated to state. Instead,  we state the following baby version in the introduction. This observation is also where this project started. 

From now we only consider the case $\Sigma=S^{d-1}$, because our motivation is on the distance set problem. The proof also works on more general surfaces for associated results.

\begin{prop}
    \label{thm-warmup}
    Let $\sigma$ denote the surface measure on $S^{d-1}$. Suppose $w$ is a  non-negative weight on $B_R\subset\R^d$, $0<\delta\ll 1/2$ is a constant, and $$R^{\delta}=r_0<\cdots<r_M=R$$
    is a sequence with $r_{j+1}\leq r_j^2$. Then
    $$w(B_R)^{-1}\int_{B_R}|\widehat{f\,d\sigma}|^2w\lesssim_{d, \delta, M} R^{C\delta}\left(\prod_{j=0}^{M-1} \sup_{\substack{\forall\,Q_{j+1}\in\D_{r_{j+1}}\\\forall\,T_j\in\T_{r_j, r_{j+1}}}} w_{Q_{j+1}}(T_j)\right) \|f\|_{L^2(S^{d-1})}^2,$$
where $\D_{r_{j+1}}$ denotes the collection of almost disjoint $r_{j+1}$-cubes, $\T_{r_j, r_{j+1}}$ denotes the collection of $r_j\times\cdots\times r_j\times r_{j+1}$ tubes, $w(B_R):=\int_{B_R} w$,  $w_Q(T):=w(Q)^{-1}\int_{T\cap Q} w$, and $C>0$ is a constant independent in $R, M, \delta$.
\end{prop}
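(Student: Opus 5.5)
We argue by induction on scales, going from the coarsest scale $R$ down to $r_0=R^\delta$. Write $A_j:=\sup_{Q_{j+1}\in\D_{r_{j+1}},\,T_j\in\T_{r_j,r_{j+1}}} w_{Q_{j+1}}(T_j)$. The goal is the one-step estimate
$$\int_{Q_{j+1}}|g|^2w\lesssim R^{O(\delta)}\,A_j\sum_{Q_j\subset Q_{j+1}}\frac{w(Q_{j+1})}{w(Q_j)}\int_{Q_j}|g_{Q_j}|^2w,$$
roughly, where the $g_{Q_j}$ are suitable pieces of $g=\widehat{f\,d\sigma}$. Iterating this from $j=M-1$ down to $j=0$ produces the product $\prod_j A_j$. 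The base case is the trivial bound at scale $r_0$: on an $r_0$-cube the function $|\widehat{f\,d\sigma}|^2$ is essentially constant up to the uncertainty principle, so $w(Q_0)^{-1}\int_{Q_0}|g|^2w\lesssim r_0^{-d}\int_{Q_0}|g|^2\cdot R^{O(\delta)}$. This reduces everything to an $L^2$ bound at scale $r_0$, and $r_0^{C}=R^{C\delta}$ is affordable.

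To get a clean induction, I would normalize in a way that avoids needing the weighted average at each step. Define
$$\Phi_j:=\sup_{Q\in\D_{r_j}}\ w(Q)^{-1}\int_Q |g_Q|^2 w,$$
where $g_Q$ is the part of $\widehat{f\,d\sigma}$ seen on $Q$. Concretely, $g_Q$ is obtained by a wave packet decomposition at scale $r_j$: cap the sphere into caps $\theta$ of radius $r_j^{-1/2}$, so the packets are $r_j^{1/2}\times\cdots\times r_j$ tubes. The main lemma is then
$$\Phi_{j+1}\lesssim R^{O(\delta)}\,A_j\,\Phi_j\cdot\frac{\|f\|^2\text{-normalization at }r_{j+1}}{\|f\|^2\text{-normalization at }r_j}.$$

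The proof of the lemma goes as follows.

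1. On $Q_{j+1}$, decompose $f=\sum_\tau f_\tau$ into caps $\tau$ of radius $r_j^{-1}$. Here the hypothesis $r_{j+1}\le r_j^2$ enters: the Fourier transform of each $f_\tau$, restricted to $Q_{j+1}$, is essentially supported on a union of $r_j\times\cdots\times r_j\times r_{j+1}$ tubes in direction $\tau$. This holds because a cap of radius $r_j^{-1}$ has curvature deviation $r_j^{-2}\le r_{j+1}^{-1}$, so it looks flat at scale $r_{j+1}$.
2. The pieces $\widehat{f_\tau d\sigma}\1_{Q_{j+1}}$ are almost orthogonal by Plancherel on the $r_j$-cubes. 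Use local orthogonality on each $Q_j\subset Q_{j+1}$: since $|Q_j|\ge$ the reciprocal of the cap separation scale, we get $\int_{Q_j}|\sum_\tau G_\tau|^2 w\approx$ (after replacing $w$ by its average on $Q_j$ at cost $R^{O(\delta)}$, using that $|G|^2$ is locally constant at scale $r_0$ only)...

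The main obstacle is exactly this step: $w$ is arbitrary, so orthogonality against $w$ fails.

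The plan for that obstacle is to interchange the roles of the weight and the function. Within $Q_{j+1}$, each $G_\tau$ is a sum of $r_j$-tubes $T$ with roughly constant amplitude $c_T$. Then:

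1. Bound $\int|\sum_\tau G_\tau|^2w$ by $\sum_{Q_j}\int_{Q_j}|G|^2w$ and use the scale-$r_j$ quantity $\Phi_j$ on each $Q_j$. This gives $\Phi_j\, w(Q_j)\times$ (local $L^2$ mass of $f$ on $Q_j$)$/|Q_j|$.
2. The local $L^2$ mass on $Q_j$ is $\sum_T c_T^2|T\cap Q_j|$ (by orthogonality of the packets inside the unweighted $Q_j$).
3. Summing over $Q_j$ gives $\Phi_j\sum_T c_T^2\, r_j^{d-1}\,w(T\cap Q_{j+1})/|Q_j|\cdot r_j^{?}$. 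Bounding $w(T\cap Q_{j+1})\le A_j\,w(Q_{j+1})$ and $\sum_T c_T^2 r_j^{d-1}r_{j+1}\approx\|f\|^2$ then closes the induction.

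Dyadic pigeonholing over the size of $w(Q_j)$ costs $\log R$ per step, hence $\lesssim_M R^{\delta}$ in total. The constant $C$ collects the $R^{O(\delta)}$ losses coming from tails and from the locally-constant approximations.
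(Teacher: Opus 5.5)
Your Steps 1--3 under ``the plan for that obstacle'' contain the paper's mechanism. Once the weight has been replaced by its masses $w(Q_j)$ on $r_j$-cubes, the argument has three parts:
\begin{itemize}
\item unweighted local orthogonality over $r_j^{-1}$-caps on each $Q_j$;
\item local constancy of each cap piece on $r_j\times\cdots\times r_j\times r_{j+1}$ tubes, which is where $r_{j+1}\le r_j^2$ enters, as you say;
\item the bound $\sum_{Q_j\subset T}w(Q_j)\lesssim A_j\,w(Q_{j+1})$.
\end{itemize}

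However, the induction as you set it up does not close. The quantity $\Phi_j=\sup_Q w(Q)^{-1}\int_Q|g_Q|^2w$ records nothing about the local mass of the function on $Q_j$. Used literally, Step 1 gives only $\Phi_{j+1}\le\Phi_j$, with no factor $A_j$. The $r_j^{-1/2}$-cap wave packets defining $g_Q$ never enter. What Step 1 actually uses is the best constant in
\[
\int_Q|h|^2\,w\;\le\;\Phi_j\,w(Q)\,|Q|^{-1}\int_{Q^*}|h|^2,\qquad Q\in\D_{r_j},\ h=\widehat{g\,d\sigma},
\]
with $Q^*$ a mild enlargement of $Q$. This says that at scale $r_j$ the weight may be replaced by its cube masses, which is exactly the form $\sum_{Q_j}w(Q_j)\int|F|^2\,dm_{Q_j}$ that the paper's iteration maintains.

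Your Step 3 bookkeeping is also off. The sum $\sum_T c_T^2\,r_j^{d-1}r_{j+1}$ is the unweighted mass $\int_{Q_{j+1}}|G|^2$, which is $\lesssim r_{j+1}\|f\|^2$, not $\|f\|^2$. So what the argument actually yields is
\[
\Phi_{j+1}\lesssim R^{O(\epsilon)}A_j\Bigl(\frac{r_{j+1}}{r_j}\Bigr)^{d-1}\Phi_j,\qquad \Phi_0\lesssim R^{O(\epsilon)}r_0^{d}.
\]
That is, the volume factor $|Q_{j+1}|/|T_j|$ appears at every step. The bound on $\Phi_0$ is not because $|\widehat{f\,d\sigma}|^2$ is constant on $r_0$-cubes; it is locally constant only at unit scale. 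It holds because $\sup_{Q_0}|h|^2\lesssim\int_{Q_0^*}|h|^2$ and the loss $r_0^d=R^{d\delta}$ is affordable.

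The missing idea is what cancels these factors. They telescope to $(R/r_0)^{d-1}$, and this is absorbed exactly by the $L^2$ bound at the \emph{top} scale,
\[
|B_R|^{-1}\int_{B_R}|\widehat{f\,d\sigma}|^2\lesssim R^{-(d-1)}\|f\|_{L^2(S^{d-1})}^2 .
\]
This is Agmon--H\"ormander; in the paper it is the closing Plancherel and Cauchy--Schwarz step over $R^{-1/2}$-caps. Combining gives $w(B_R)^{-1}\int_{B_R}|\widehat{f\,d\sigma}|^2w\lesssim R^{O(\delta)}\prod_jA_j\,\|f\|^2$, once the localization parameter is taken as $\epsilon\approx\delta/M$ so the accumulated loss does not depend on $M$.

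Your outline instead says everything reduces to an $L^2$ bound at scale $r_0$, and it leaves the normalization ratio in your main lemma unspecified. Under the natural reading, where ``$\|f\|^2$-normalization at scale $r$'' means $r^{-(d-1)}\|f\|^2$, that ratio is the reciprocal of the correct factor $(r_{j+1}/r_j)^{d-1}$. The lemma then fails already for $w\equiv1$.

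With these repairs your argument coincides with the paper's, with one difference. You recombine the cap pieces at every scale, which needs a harmless reverse local orthogonality on $Q_{j+1}^*$. The paper instead carries the pieces $F_{\theta_j}$ forward and applies Plancherel once at the end. The dyadic pigeonholing over $w(Q_j)$ is unnecessary.
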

When $w=\chi_{B_R}$, this result becomes 
$$\|\widehat{f\,d\sigma}\|_{L^2(B_R)}^2\lesssim_{d,\delta} R^{C\delta}\cdot R\|f\|_{L^2(S^{d-1})}^2,$$
which coincides with the classical sharp estimate of Agmon-H\"ormander \cite{AH76} up to a factor $R^{C\delta}$. The constrain $r_{j+1}\leq r_j^2$ in Proposition \ref{thm-warmup} is due to technical reasons. This is also why we need some regularity on the pin set $F$ in Theorem \ref{thm-main}. I wonder if it is necessary.

A heuristic proof of Proposition \ref{thm-warmup} is given in Section \ref{sec-proof-warmup} to illustrate the idea of our $L^2$ ball inflation. A detailed discussion on the version for Theorem \ref{thm-main} is given in Section \ref{subsec-L2-good}.

\subsection*{Notation}
 We list notation used in this paper to make it easier for readers to recall.

$X\lesssim Y$ means $X\leq CY$ for some constant $C>0$; $X\lesssim_\delta Y$ means $X\leq C_\delta Y$ for some constant $C_\delta>0$ depending on $\delta>0$. We omit the subindex $\delta$ in the proof because $\delta>0$ is a fixed constant. The constant $C>0$ in $R^{C\delta}$ may vary from line to line but is independent in $R$ and $\delta$. All numbers in this paper can be assumed dyadic.

$\RD(R)$ means it is $\leq C_NR^{-N}$ for arbitrary large $N>0$, called a negligible error in the proof.

$\D_r$ denotes the collection of almost disjoint half-open-half-closed $r$-cubes that cover $\R^d$. We may assume all cubes in this paper are dyadic. For a cube $Q$, denote by $2Q$ its dilation by factor $2$ from the center. Once the collection $\D_r$ is clear, denote by $Q(y)$ the $Q\in\D_r$ that contains $y$.

Let $N(E, r)$ denote the smallest number of $r$-balls needed to cover $E$.

Let $m$ denote the Lebesgue measure. We also use $|A|:=m(A)$ to denote the Lebesgue measure of a set $A$.

Let $\sigma_r$ denote the normalized surface measure on $rS^{d-1}$. Also denote $\sigma=\sigma_1$.

For a measure $\nu$, denote by $\nu_Q:=\nu(Q)^{-1}\nu|_Q$ the normalization of $\nu$ restricted on $Q$.

$\widehat{f}(\xi):=\int e^{-2\pi i x\cdot\xi} f(x)\,dx$ is the Fourier transform; $f^\vee(\xi):=\int e^{2\pi i x\cdot\xi} f(x)\,dx$ is the inverse Fourier transform.

The direction of a tube $T$, denoted by $\theta(T)$, is the direction of its long axis. We say a tube $T$ is parallel to a cap in $S^{d-1}$ if its direction matches the center of the cap. Denote by $2T$ its dilation by factor $2$ from the center of $T$. 

For $x\neq y$, let $l_{x,y}$ denote the line connecting $x,y$. Denote by $T_{r_1\times r_2}(x;y)$ the $r_1\times r_2$-tube centered at $x$ of direction $l_{x,y}$.

\subsection*{Organization}
This paper is organized as follows. In Section \ref{sec-prelim}, we review different dimensions and some standard techniques in modern harmonic analysis. In Section \ref{sec-Good-Bad}, we first review the single-scale Good-Bad decomposition of Guth-Iosevich-Ou-Wang, then introduce our multi-scale Good-Bad decomposition and prove the $L^1$-estimate of our bad part. In Section \ref{sec-L2}, we first give a heuristic proof of Proposition \ref{thm-warmup}, then give a detailed proof of the $L^2$-estimate of our good part. In Section \ref{sec-proof-main}, we use our estimates on good and bad parts to prove Theorem \ref{thm-main}.

\subsection*{Acknowledgement}
The author would like to thank Tuomas Orponen for comments and suggestions on an earlier draft that helped improve the manuscript.

\section{Preliminaries}\label{sec-prelim}

Let $\mathcal{M}(E)$ denote the family of finite Borel measures supported on a compact subset of $E$. There are two equivalent ways to define Hausdorff dimension of sets $E\subset \R^d$ in terms of measures:
$$\begin{aligned}\dH E&=\sup_s\{s: \exists \,\mu\in\mathcal{M}(E) \text{ with }\mu(B(x,r))<r^s, \forall x\in\R^d, r>0\}\\&=\sup_s\{s: \exists \,\mu\in\mathcal{M}(E) \text{ with }I_s(\mu)<\infty\},\end{aligned}$$
where $I_s(\mu)$ is the $s$-dimension energy of $\mu$ defined by
$$I_s(\mu):=\iint |x-y|^{-s}d\mu(x)d\mu(y)= c_{s,d}\int|\hat{\mu}(\xi)|^2|\xi|^{-d+s}d\xi.$$
A finite Borel measure $\mu$ satisfying
$$\mu(B(x,r))\lesssim r^s,\,\forall x\in\R^d, r>0,$$
is called a Frostman measure of dimension $s$.

Let $N(E, r)$ denote the smallest number of $r$-balls needed to cover $E$. The upper Minkowski dimension (also called the upper box-counting dimension) is defined by
$$\overline{\dim}_M E:=\limsup_{r\rightarrow 0}\frac{\log N(E, r)}{\log 1/r}.$$
The packing dimension of $E$, denoted by $\dim_P E$, is defined by
$$\dim_P E:=\inf \{\sup_i \overline{\dim}_M E_i: E=\bigcup_{i=1}^\infty E_i, E_i\text{ is bounded}\}.$$
In particular, for every finite Borel measure $\mu$ on $E$ and every $\epsilon>0$, there exists $E_i\subset E$ such that $\overline{\dim}_M E_i<\dim_P E+\epsilon$ and $\mu(E_i)>0$.

It is easy to verify that $\dH E\leq \dim_P E\leq \overline{\dim}_M E$, and $\dH E=\dim_P E$ if $E$ is Ahlfors-David regular (not vice versa). For details and more discussion on their relations, we refer to Mattila's books \cite{Mat95} as a reference. 
 
We will need several standard techniques in modern harmonic analysis. We only state the version for our use. The proofs are not hard, so we omit details and refer to, for example, Guth's lecture notes (see his homepage) 
as a reference.

Throughout this paper, $\D_r$ denotes the collection of almost disjoint half-open-half-closed $r$-cubes that cover $\R^d$, $R^\epsilon Q$ denotes the dilation by factor $R^\epsilon$ from the center of $Q$, $m$ denotes the Lebesgue measure, and $m_Q:=m(Q)^{-1}m|_Q$ denotes the normalized Lebesgure measure on $Q$.

\begin{itemize}
    \item  (Localization) Suppose $\supp \hat{f}\subset B_R$ and $\nu$ is a finite Borel measure. Then for all $\epsilon>0$,
    $$\int |f|^2\,d\nu\lesssim_\epsilon R^\epsilon \sum_{Q\in\D_{R^{-1}}}\nu(R^\epsilon Q)\int |f|^2\,dm_{R^\epsilon Q}+\RD(R)\|f\|_{L^2(m)}^2.$$
    \item (Local $L^2$-orthogonality) Suppose $\{f_i\}$ is a family of functions with Fourier transform supported on $R$-balls of bounded overlapping. Then on each $R^{-1}$-ball $B_{R^{-1}}$,
    $$\int|\sum f_i|^2dm_{B_{R^{-1}}}\lesssim_{\epsilon}R^\epsilon \sum\int |f_i|^2dm_{R^\epsilon B_{R^{-1}}}+\RD(R)\sum\|f_i\|_{L^2(m)}^2.$$
    \item (Local constancy) Suppose $\hat{f}$ is supported in a $R_1\times\cdots\times R_d$ rectangle $T$ in $\R^d$, then for every dual rectangle $T^*$, that is a $R_1^{-1}\times \cdots \times R_d^{-1}$ rectangle whose $R_j^{-1}$-axis is parallel to the $R_j$-axis of $T$, we have, for all $\epsilon>0$,
    $$\|f\|^2_{L^\infty(T^*)}\lesssim_\epsilon R^\epsilon \|f\|_{L^2(m_{R^\epsilon T^*})}^2+\RD(R)\|f\|_{L^2(m)}^2 .$$
\end{itemize}

\section{A multi-scale Good-Bad decomposition}\label{sec-Good-Bad}
\subsection{The single-scale Good-Bad decomposition of Guth-Iosevich-Ou-Wang}\label{subsec-Good-Bad-GIOW}
For nearly 30 years, the $L^2$-method has been the most efficient approach to the Falconer distance conjecture, until Guth-Iosevich-Ou-Wang \cite{GIOW18} pointed out that it cannot beat Wolff's partial result in the plane \cite{Wol99}. To go further, they proposed the novel Good-Bad decomposition as follows. 

Let $R_0$ be a large dyadic number and $R_i:=2^iR_0$. For each $i\geq 1$, cover the unit circle by $R_i^{-1/2}$-caps $\theta$, and for each $\theta$, find a $R_i^{1/2} \times 10R_i$ rectangle parallel to $\theta$ such that the union of these rectangles covers the annulus $|\xi|\approx R_i$. Let $\{\psi_{i, \theta}\}$ be a partition of unity subordinate to this cover and write
\begin{equation}
	\label{def-psi-i-theta}
	1 = \psi_0 + \sum_{i \geq 1}\sum_{\theta : \,R_i^{-1/2}\text{-caps}}\psi_{i, \theta}.
\end{equation} 

Let $\delta > 0$ be a small fixed constant.

For each $(i, \theta)$, cover the unit ball $B_1$ with $R_i^{-1/2 + \delta} \times 1$ tubes $T$ parallel to $\theta$.  Denote $\T_{i,\theta}$ as the collection of these tubes, and let $\eta_T$ be a partition of unity subordinate to this cover such that
$$1=\sum_{T \in \T_{i, \theta}} \eta_T \text{ on }B_2.$$ 
Also denote
\begin{equation}\label{def-T-i}
    \T_i:=\bigcup_{\theta: \,R_i^{-1/2}\text{-caps}}\T_{i,\theta}.
\end{equation}

Now, suppose $\mu$ is a finite Borel measure on $E\subset B_1$ and $\nu$ is a finite Borel  measure on $F\subset B_1$ of disjoint supports. We say a tube $T \in \T_i$ is bad if
$$ \nu (T) \geq R_i^{-1/2 + 100 \delta},$$
and good otherwise.

Define $M_0 \mu := (\psi_0 \hat{\mu})^{\vee}$,
\begin{equation}\label{def-M-T} M_T \mu := \eta_T (\psi_{i, \theta} \hat{\mu})^{\vee}, \text{ if } T \in \T_{i, \theta},\, i\geq 1,\end{equation}
and
$$\mu_{good} := M_0 \mu + \sum_{i\geq 1}\sum_{T \in \T_i, \, good} M_T \mu.$$
If one can show that
\begin{equation}
    \label{GIOW-L1-bad}
    \|d_*^y(\mu_{good})-d_*^y(\mu))\|_{L^1(\R\times \nu)}=\|\sum_{i\geq 1}\sum_{T \in \T_i, \, bad} d_*^y(M_T \mu)\|_{L^1(\R\times\nu)}<1/100
\end{equation}
and
\begin{equation}
    \label{GIOW-L2-good}
 \|d_*^y(\mu_{good})\|_{L^2(\R\times\nu)}<\infty,  
\end{equation}
then
$$\begin{aligned}1=&\int\left(\int_{\Delta_y(E)} d^y_*(\mu)\right)d\nu(y)\\\leq &\|d_*^y(\mu_{good})-d_*^y(\mu))\|_{L^1}+( \sup_{y\in\supp\nu}|\Delta_y(E)|^{1/2})\|d_*^y(\mu_{good})\|_{L^2}\end{aligned}$$
implies that 
$$|\Delta_y(E)|>0,\ \text{for some }y\in F.$$

In \cite{GIOW18}, under the assumption $\dH F>1$ and $3\dH E+ \dH F> 5$, the bad part estimate \eqref{GIOW-L1-bad} follows from a radial projection theorem of Orponen \cite{Orp19}: suppose $\pi^x(y):=\frac{y-x}{|y-x|}$ and $t\in(1,2)$, then there exists $p>1$ such that
\begin{equation}\label{Orponen-Lp}\|\pi^x_*\nu\|_{L^p(S^1\times\mu)}\lesssim I_t(\nu)^{\frac{1}{2}}I_{2-t}(\mu)^{\frac{1}{2p}};\end{equation}
and the good part estimate \eqref{GIOW-L2-good} follows from a remarkable refined decoupling inequality. We point out that the constraint $\dH F>1$ in their result arises from the radial projection estimate \eqref{Orponen-Lp}.

\subsection{A multi-scale Good-Bad decomposition}\label{subsec-Good-Bad}

We call the Good-Bad decomposition of Guth-Iosevich-Ou-Wang ``single-scale", as only $R_i^{-1/2+\delta}\times 1$ tubes are considered for each $i\geq 1$.

In this paper, we propose a multi-scale Good-Bad decomposition. Though inspired by Guth-Iosevich-Ou-Wang, our Good-Bad decomposition is not a direct generalization of theirs. Heavy tubes are removed in different ways. In \cite{GIOW18}, Orponen's $L^p$ radial projection estimate \eqref{Orponen-Lp} is used to remove heavy tubes on $F$, while in this paper, it is used to remove heavy tubes on $E$, then an $L^2$-technique removes heavy tubes on $F$. See the proof of Proposition \ref{prop-good-bad} below for details. This is also why $\dH F\leq 1$ is allowed in our Theorem \ref{thm-main}.

Fix $i\geq 1$, let $R:=R_i$ be as in the previous subsection, and $\delta>0$ be a small constant that will be clarified later. Consider the sequence
\begin{equation}\label{def-r-j}
r_j:=R^{2^j\delta},\ j=0,\dots,\log_2 1/\delta. \end{equation}

Divide the unit sphere into $r_0^{-1}$-caps $\theta_0$, then divide each $\theta_0$ into $r_1^{-1}$-caps $\theta_1\subset\theta_0$,..., and finally into $r_{\log1/\delta-1}^{-1}=R^{-1/2}$-caps $\theta_{\log1/\delta-1}$. We may assume that $\{\theta_{\log1/\delta-1}\}$ coincides with  $\{\theta\}$ in the definition of $\T_i$ in \eqref{def-T-i}.

Recall $\nu$ is a finite Borel measure on $F$. We may also assume $\nu(F)=1$. For each cube $Q$ with $\nu(Q)\neq 0$, let
$$\nu_{Q}:=\nu(Q)^{-1}\nu|_{Q}$$ 
denote the normalization of $\nu$ restricted on $Q$. Below we only consider cubes with nonzero measure.

For each $r_j^{-1}$-cap $\theta_j$ and each dyadic $r_j/R$-cube $Q_j\in\D_{r_j/R}$, we say $\theta_j$ is bad for $Q_j$ if the $r_j/R\times r_{j+1}/R$ tube $T_{r_j/R\times r_{j+1}/R}$ centered at the center of $Q_j$ of direction $\theta_j$ satisfies
    \begin{equation}\label{def-bad-Q-r-j}\nu_{R^{2(j+2)\delta^2}Q_{j+1}}(R^{\delta}T_{r_j/R\times r_{j+1}/R})>R^{10\delta}\cdot (r_j/r_{j+1})^{{\min\{t, 1\}}},\end{equation}
where $Q_{j+1}\in\D_{r_{j+1}/R}$ is the dyadic $r_{j+1}/R$-cube containing $Q_j$, and $t$ is the dimension of our regular pin set $F$. 

For each $Q_j\in\D_{r_j/R}$, denote
\begin{equation}
	\label{def-Theta-j-Q-j}
	\Theta_j(Q_j):=\{\theta_j: r_j^{-1}\text{-caps, bad for } Q_j\}.
\end{equation}
More generally, for $Q_{j'}\in\D_{r_{j'}/R}, Q_{j}\in\D_{r_j/R}, Q_{j'}\subset Q_j, j'\leq j$, define
\begin{equation}
	\label{def-Theta-j-Q-j'}\Theta_j(Q_{j'}):=\Theta_j(Q_{j}).
\end{equation}

Recall $\T_i$ from \eqref{def-T-i}. For each dyadic $R_i^{-1+\delta}$-cube $Q\in \D_{r_0/R}$ and $j=0,\dots,\log\delta^{-1}-1$, denote
\begin{equation}\label{def-T-i-Q-j}\T_{i, Q,j}:=\{T\in\T_i: 2T\cap Q\neq\emptyset \text{ and } \exists\, \theta_j\in\Theta_j(Q) \text{ with } \theta(T)\in\theta_j \}.\end{equation}
Then let
\begin{equation}\label{def-T-i-Q}\T_{i, Q}:=\bigcup_{0\leq j\leq \log\delta^{-1}-1}\T_{i, Q,j}.\end{equation}

Now we are ready to decompose $\mu$ into good and bad parts. Recall the definition of $M_0\mu, M_T\mu$ from \eqref{def-M-T}.

First, for each $i\geq 1$ and each dyadic $R_i^{-1+\delta}$-cube $Q\in\D_{r_0/R}$, define
\begin{equation}\label{def-good-i-Q}\mu_{good, i, Q}:= \sum_{T\in \T_i\backslash \T_{i, Q}} M_T \mu.\end{equation}
Then, let $i\geq 1$ vary, and for each $y\in\supp\nu$, define
\begin{equation}\label{def-mu-good-y}\mu_{good, y}:=M_0\mu+\sum_{i\geq 1}\mu_{good, i, Q(y)},\end{equation}
where $Q(y)$ in each $\mu_{good, i, Q(y)}$ denotes the dyadic $R_i^{-1+\delta}$-cube containing $y$ .

Compared with the single-scale case, our $\mu_{good, y}$ depends on $y$. This is because the associated bad caps are different for different $y\in \supp\nu$. It brings additional difficulties on the $L^2$-estimate of the good part in the next section because we need to integrate over $y$. On the other hand, the $L^1$-estimate of the bad part below in this section is straightforward because it is a pointwise estimate with $y$ fixed. 

\begin{lem}\label{lem-3.5-GIOW}
For each $y\in\supp\nu$,
$$\begin{aligned}\|d_*^y(\mu_{good, y})-d_*^y(\mu)\|_{L^1(\R)}&=\|\sum_{i\geq 1}\sum_{T\in\T_{i, Q(y)}}d_*^y(M_T\mu)\|_{L^1(\R)}\\&\leq \sum_{i\geq 1}\sum_{T\in\T_{i, Q(y)}}\|d_*^y(M_T\mu)\|_{L^1(\R)}\\
    &\lesssim \sum_{i \geq 1} \mu ( Bad_{i}(y)) + \RD(R_0),\end{aligned}$$
    where,
    $$Bad_{i}(y) := \bigcup_{T \in \T_{i, Q(y)}} 4T. $$
\end{lem}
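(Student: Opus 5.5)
The first equality and the triangle inequality are bookkeeping; the real content is the last inequality. For the equality, I would expand $\mu$ using the partition of unity \eqref{def-psi-i-theta} together with $\sum_{T\in\T_{i,\theta}}\eta_T=1$ on $B_2$. Since $\mu$ is supported in $B_1$, this gives
$$\mu=M_0\mu+\sum_{i\geq1}\sum_{T\in\T_i}M_T\mu+\text{(tail)}.$$
The tail comes from $(\psi_{i,\theta}\hat\mu)^\vee$ living outside $B_2$, and $d^y_*$ of it contributes only $\RD(R_i)$, summable to $\RD(R_0)$. I would simply absorb it into the error term. Subtracting $\mu_{good,y}$ as defined in \eqref{def-mu-good-y} leaves exactly $\sum_i\sum_{T\in\T_{i,Q(y)}}M_T\mu$. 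The map $d^y_*$ is linear, so the first line follows, and the triangle inequality gives the second.

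For the last inequality I would follow the single-scale argument of Guth--Iosevich--Ou--Wang (their Lemma 3.5) and prove a per-tube bound with $y$ fixed:
$$\|d^y_*(M_T\mu)\|_{L^1(\R)}\lesssim \mu(4T)+\RD(R_i)\quad\text{whenever } y\notin 2T \text{ or } y \text{ is suitably separated from } T .$$
The key observation concerns the geometry of $y$ relative to $T$. If $T\in\T_{i,Q(y)}$, then $2T\cap Q(y)\neq\emptyset$ with $Q(y)$ of side $R_i^{-1+\delta}$, so $y$ lies essentially on the core line of $T$, within distance $\lesssim R_i^{-1/2+\delta}$. Since $\mu,\nu$ have disjoint supports, $|x-y|\gtrsim1$ on $\supp\mu$. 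Hence on $T$ the function $x\mapsto|x-y|$ behaves like projection onto the direction $\theta(T)$. $M_T\mu$ has Fourier support in the $R_i^{1/2}\times R_i$ rectangle dual-aligned with $\theta$, so it oscillates at frequency $\approx R_i$ along $\theta$ and is essentially constant across.

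To bound the $L^1$ norm, I would write $d^y_*(M_T\mu)(t)=\int_{|x-y|=t}M_T\mu\, d\sigma$, up to a Jacobian $\approx1$. Then I would bound $|M_T\mu|$ pointwise via the kernel: $M_T\mu=\eta_T\cdot(\check\psi_{i,\theta}*\mu)$, where $|\check\psi_{i,\theta}|$ is an $L^1$-normalized bump adapted to an $R_i^{-1/2}\times R_i^{-1}$ rectangle with rapid decay. Therefore $\|d^y_*M_T\mu\|_{L^1}\leq\|M_T\mu\|_{L^1(\R^2)}\lesssim\int\eta_T\,(|\check\psi_{i,\theta}|*\mu)$. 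Mass of $\mu$ within $R_i^{-1/2+\delta}$-neighbourhoods of $T$ contributes $\lesssim\mu(4T)$, and mass farther away is hit by the Schwartz tail $\RD(R_i)$ since $\eta_T$ is localized to the $R_i^{-1/2+\delta}$-wide tube. Summing over $T\in\T_{i,Q(y)}$, the tubes $4T$ within each direction $\theta$ have bounded overlap. Only $\lesssim R_i^{C\delta}$-many tubes per direction meet $Q(y)$, in fact $O(1)$ since their width exceeds the cube size. So $\sum_T\mu(4T)\lesssim\mu(Bad_i(y))$ after controlling overlaps across directions. Summing the $\RD(R_i)$ errors over the polynomially many tubes and over $i$ gives $\RD(R_0)$.

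The main obstacle is this overlap step. Different directions $\theta$ give tubes through $Q(y)$ that all overlap near $y$, and the trivial bound $\sum_T\mu(4T)$ would overcount by the number of caps. I would handle it as GIOW do: away from $y$, say at distance $\gtrsim R_i^{-1/2+\delta}$, tubes of distinct $R_i^{-1/2}$-directions through $y$ become essentially disjoint, since their angular separation $R_i^{-1/2}$ times the distance exceeds their width only after an $R_i^{\delta}$ loss. Since $\supp\mu$ is at distance $\gtrsim1$ from $y$, the overlap is $\lesssim R_i^{\delta}$ at worst. I would aim to remove this by using the cruder $L^1$ bound only after exploiting that the bad directions are grouped into caps $\theta_j$, so that $Bad_i(y)$ is a union of genuine sectors. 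If the loss cannot be fully removed, I would accept a $\lesssim$ with an $R_i^{O(\delta)}$ factor inside the definition of $Bad_i$.
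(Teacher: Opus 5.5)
Your route is the paper's. Fix $y$, apply the triangle inequality, and bound each tube by the Guth--Iosevich--Ou--Wang estimate $\|d^y_*(M_T\mu)\|_{L^1(\R)}\lesssim\mu(4T)+\RD(R_i)$. Your derivation via $\|d^y_*f\|_{L^1}\leq\|f\|_{L^1(\R^2)}$ gives this for every $y$, so the condition you attach to it is superfluous, and every $T\in\T_{i,Q(y)}$ has $y\in 4T$ anyway. The paper asserts that this per-tube bound suffices. You correctly observe that it does not by itself: summing $\mu(4T)$ over $T\in\T_{i,Q(y)}$ counts a point $x$ with multiplicity $\approx R_i^{\delta}$, because all $R_i^{-1/2}$-directions within $R_i^{-1/2+\delta}$ of $l_{x,y}$ give tubes containing both $x$ and $y$.

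Your proposal leaves this unresolved, so the last inequality is not established, and neither remedy you suggest works.
\begin{itemize}
\item Grouping directions into caps $\theta_j$ is irrelevant, because the overcount already happens inside a single bad cap. If $\theta_0\in\Theta_0(Q(y))$, every tube meeting $Q(y)$ with direction in $\theta_0$ lies in $\T_{i,Q(y)}$. Then $\sum_T\mu(4T)$ is comparable to $R_i^{\delta}$ times the $\mu$-mass of the corresponding sector, however clean the union $Bad_i(y)$ is.
\item Accepting an $R_i^{O(\delta)}$ loss proves a weaker lemma that is useless downstream. The estimate \eqref{measure-Bad-i} only gives $\mu\times\nu(Bad_i)\lesssim(\log R_i)^2R_i^{-\delta^3}$, so an extra factor $R_i^{\delta}$ makes the sum over $i$ diverge, and Proposition \ref{prop-our-L1-bad} would fail.
\end{itemize}

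What is missing is a sharper per-tube bound that keeps the oscillation you discard when passing to $\|M_T\mu\|_{L^1(\R^2)}$. Your heuristic that $|x-y|$ behaves like projection onto $\theta(T)$ is accurate only when $\angle(l_{x,y},\theta(T))\lesssim R_i^{-1/2}$. For other $x\in 4T$, the circles centred at $y$ cross the $R_i^{-1/2}\times R_i^{-1}$ wave packet at $x$ with a tilt exceeding its aspect ratio, and the contribution of $x$ cancels.

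Concretely, write $d^y_*(M_T\mu)(t)=t\int K_t(x)\,d\mu(x)$ with $K_t(x)=\int_{S^1}\eta_T(y+t\sigma)\check\psi_{i,\theta}(y+t\sigma-x)\,d\sigma$. Then:
\begin{itemize}
\item expand $\check\psi_{i,\theta}$ as a Fourier integral over $\xi\in\supp\psi_{i,\theta}$;
\item apply stationary phase in $\sigma$, which is valid since $\eta_T(y+t\,\cdot)$ varies on angular scale $R_i^{-1/2+\delta}\gg R_i^{-1/2}$;
\item integrate by parts in $\xi$ against the phase $(y-x)\cdot\xi+t|\xi|$.
\end{itemize}
The angular derivative of this phase is $\approx\angle(l_{x,y},\theta)$ on a region of width $R_i^{1/2}$ in the angular direction. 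The radial derivative confines $t$ to within $\approx R_i^{-1}$ of $|x-y|$, while $|K_t|\lesssim R_i$. This yields
\[
\|d^y_*(M_T\mu)\|_{L^1(\R)}\lesssim_N\int_{4T}\bigl(1+R_i^{1/2}\angle(l_{x,y},\theta(T))\bigr)^{-N}\,d\mu(x)+\RD(R_i).
\]
Over $R_i^{-1/2}$-caps one has $\sum_\theta\bigl(1+R_i^{1/2}\angle(l_{x,y},\theta)\bigr)^{-N}\lesssim1$, and only $O(1)$ tubes per cap meet $Q(y)$. So summing over $\T_{i,Q(y)}$ gives $\lesssim\mu(Bad_i(y))+\RD(R_i)$ with no loss. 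This is the same non-stationary-phase mechanism the paper invokes for the case $y\notin 4T$, applied to the position of $x$ instead of $y$.
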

\begin{proof}
	[Proof of Lemma \ref{lem-3.5-GIOW}]
	It suffices to show that, for each $y\in\supp\nu$ and $T\in\T_{i,\theta}$,
	$$\|d_*^y(M_T\mu)\|_{L^1(\R)}\lesssim \begin{cases}
		\mu(4T)+\RD(R_i), & y\in 4T\\\RD(R_i), & y\notin 4T
	\end{cases},$$
	which is already proved in Section 3 in \cite{GIOW18}. We sketch the proof for completeness.
	
	By the definition of $M_T \mu$ in \eqref{def-M-T},
	$$M_T \mu(\cdot):= \eta_T (\psi_{i, \theta} \hat{\mu})^{\vee}(\cdot)=\eta_T(\cdot)\int \hat{\psi_{i, \theta}}(x-\cdot) d\mu(x),$$
	and then
	\begin{equation}\label{integral-d-y-M}d^y_*(M_T \mu)(t)= t\int \int_{S^1}\eta_T(y+t\sigma)\,\hat{\psi_{i, \theta}}(x-y-t\sigma) \, d\mu(x)\,d\sigma.\end{equation}
	Notice that $\eta_T$ is essentially supported in a $R_i^{-1/2+\delta}\times 1$ tube $T$ and $\hat{\psi_{i, \theta}}$ is essentially supported in a $R_i^{-1/2}\times R_i^{-1}$ tube perpendicular to $T$ centered at the origin. 
	
	The first observation from the essential supports of $\eta_T$ and $\hat{\psi_{i, \theta}}$ is, the integral \eqref{integral-d-y-M} equals $\RD(R_i)$ if $x\notin 4T$. Therefore, for all $y\in\R^2$,
	 $$\begin{aligned}\|d^y_*(M_T \mu)\|_{L^1(\R)}\lesssim & \int_{4T} \left(\int \eta_T(z)\,|\hat{\psi_{i, \theta}}(x-z)|\,dz\right) d\mu(x)+\RD(R_i)\\\lesssim &\mu(4T)+\RD(R_i).\end{aligned}$$
	 
	 On the other hand, when $y\in\supp\nu\backslash 4T$, the support of $\eta_T$ implies that the nontrivial integral domain of $\sigma$ in \eqref{integral-d-y-M} is $R_i^{-1/2}$ separated from the direction of $T$. Since $|x-y|\approx 1$, there is no way for $x-y-t\sigma$ to lie in the essential support of $\hat{\psi_{i, \theta}}$, that leads to $\RD(R_i)$ for the integral \eqref{integral-d-y-M}, and finally
	 $$\|d^y_*(M_T \mu)\|_{L^1(\R)}=\RD(R_i), \ y\in\supp\nu\backslash 4T.$$
	 
\end{proof}

Then, with
\begin{equation}\label{def-Bad-i}\begin{aligned}Bad_i:=&\{(x,y)\in \supp\mu\times \supp\nu:x\in Bad_i(y)\}\\=&\{(x,y)\in \supp\mu\times \supp\nu: \exists \,T\in  \T_{i, Q(y)},\, x\in 2T\},\end{aligned}\end{equation}
 Lemma \ref{lem-3.5-GIOW} implies that
   $$\begin{aligned}\int \|d_*^y(\mu_{good,y})-d_*^y(\mu))\|_{L^1(\R)}d\nu(y)\lesssim & \sum_{i \geq 1} \int \mu (Bad_i(y))d\nu(y)+ \RD(R_0)\\= &\sum_{i \geq 1} \mu\times\nu (Bad_i)+ \RD(R_0).\end{aligned}$$
Then we need the following geometric observation on pairs $(x,y)\in Bad_i$: by our definition of ``bad" in \eqref{def-bad-Q-r-j}-\eqref{def-T-i-Q} and \eqref{def-Bad-i}, there exist $0\leq j\leq \log1/\delta-1$ and $T\in\T_i$ such that $x,y\in 10T$ and $\theta(T)$ lies a $r_j^{-1}$-cap $\theta_j$ bad for $Q_j(y)\in\D_{r_j/R}$. Therefore, if we use
$$T_{r_j/R\times r_{j+1}/R}(y;x)$$
to denote the $r_j/R\times r_{j+1}/R$ tube centered at $y$ parallel to the line $l_{x,y}$ connecting $x,y$, then 
$$R^{2\delta}T_{r_j/R\times r_{j+1}/R}(y;x)\supset R^{\delta}T_{r_j/R\times r_{j+1}/R},$$
where $T_{r_j/R\times r_{j+1}/R}$ is the $r_j/R\times r_{j+1}/R$-tube centered at the center of $Q_j(y)$ of direction $\theta_j$ from \eqref{def-bad-Q-r-j}. Consequently,
$$\nu_{R^{2(j+2)\delta^2}Q_{j+1}}(R^{2\delta}T_{r_j/R\times r_{j+1}/R}(y;x))>R^{10\delta}\cdot (r_j/r_{j+1})^{\min\{t, 1\}},$$
and therefore the set $Bad_i$ is a subset of
$$\{(x,y)\in E\times F: \exists \,j,\,\nu_{R^{2(j+2)\delta^2}Q_{j+1}(y)}(R^{2\delta}T_{r_j/R\times r_{j+1}/R}(y;x))>R^{10\delta}\cdot (r_j/r_{j+1})^{\min\{t, 1\}}\}.$$
Moreover, $Bad_i$ is a subset of the construction in Proposition \ref{prop-good-bad} below with $R=R_i$ (it seems easier to compare their complement). Hence, if $\mu$ has finite $s$-energy with $s>1$ and $\nu$ is Frostman of dimension $t>2-s$ with $\overline{\dim}_M \supp\nu<t+\delta^2$, we have
\begin{equation}\label{measure-Bad-i}\mu\times\nu(Bad_i)\lesssim (\log R)^2 R^{-\delta^3}\end{equation}
and finally conclude the following.
\begin{prop}\label{prop-our-L1-bad}
    Suppose $\mu,\nu$ are probability measures on the unit ball with disjoint support, $I_s(\mu)<\infty$ for some $s>1$, and $\nu$ is
   Frostman of dimension $t>2-s$ with $\overline{\dim}_M \supp\nu<t+\delta^2$. Then, when $R_0$ is large enough and $\delta$ is small enough in terms of $s,t$,
    $$\int \|d_*^y(\mu_{good,y})-d_*^y(\mu))\|_{L^1(\R)}d\nu(y)< 1/100.$$
\end{prop}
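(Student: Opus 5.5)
Proposition \ref{prop-our-L1-bad} follows directly from Lemma \ref{lem-3.5-GIOW} and the measure estimate \eqref{measure-Bad-i}. The only real work is to justify \eqref{measure-Bad-i}, which the text attributes to Proposition \ref{prop-good-bad}.

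First, integrate Lemma \ref{lem-3.5-GIOW} in $y$ against $\nu$. As computed above, this gives
$$\int \|d_*^y(\mu_{good,y})-d_*^y(\mu)\|_{L^1(\R)}\,d\nu(y)\lesssim \sum_{i\geq 1}\mu\times\nu(Bad_i)+\RD(R_0).$$
The constant in Lemma \ref{lem-3.5-GIOW} is uniform in $y$: it comes only from the essential supports of $\eta_T$ and $\hat{\psi_{i,\theta}}$ together with $|x-y|\approx 1$, and the latter holds because $\supp\mu$ and $\supp\nu$ are disjoint compact sets. The $\RD$ terms summed over $i$ are $\sum_i \RD(2^iR_0)=\RD(R_0)$.

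Next, I use the geometric observation already made: $Bad_i$ is contained in the set of pairs $(x,y)$ for which some $j$ gives a heavy $\nu$-tube $R^{2\delta}T_{r_j/R\times r_{j+1}/R}(y;x)$ relative to the localized normalized measure $\nu_{R^{2(j+2)\delta^2}Q_{j+1}(y)}$, with $R=R_i$. Applying Proposition \ref{prop-good-bad} with $R=R_i$ yields \eqref{measure-Bad-i}, i.e.
$$\mu\times\nu(Bad_i)\lesssim (\log R_i)^2R_i^{-\delta^3}.$$
Its hypotheses are $I_s(\mu)<\infty$ with $s>1$, $\nu$ Frostman of dimension $t>2-s$, and $\overline{\dim}_M\supp\nu<t+\delta^2$, and these are exactly the hypotheses of the present proposition. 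I expect the proof of Proposition \ref{prop-good-bad} to go as follows. For each fixed $j$, one uses Orponen's estimate \eqref{Orponen-Lp} to remove $x$'s concentrated on tubes through $y$; since $s>1$, this is the step that needs the energy of $\mu$. An $L^2$ counting argument at scale $r_{j+1}/R$ then handles $\nu$. This counting uses the near-regularity of $\nu$: the upper Minkowski bound matches the Frostman bound, so at every scale $\nu_Q$ behaves like a $t$-dimensional measure, and the expected mass of a $r_j/R\times r_{j+1}/R$ tube is about $(r_j/r_{j+1})^{\min\{t,1\}}$. The heaviness threshold carries the extra factor $R^{10\delta}$, which beats the $R^{C\delta^2}$ losses coming from the dilations and the Minkowski slack, so only an $R^{-\delta^3}$-fraction of pairs can be bad. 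There are $O(\log 1/\delta)$ values of $j$ and $O(\log R)$ dyadic pigeonholing losses, which is where the factor $(\log R)^2$ comes from. Here I simply invoke Proposition \ref{prop-good-bad}.

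Finally, I sum over $i$:
$$\sum_{i\geq1}(\log(2^iR_0))^2(2^iR_0)^{-\delta^3}\lesssim_\delta R_0^{-\delta^3/2}.$$
This is a geometric-type series, so it is $<1/200$ once $R_0$ is large in terms of $\delta$. Here $\delta$ is fixed first, small in terms of $s$ and $t$, so that Proposition \ref{prop-good-bad} applies, and then $R_0$ is chosen; the $\RD(R_0)$ term is also absorbed for $R_0$ large. The main obstacle is entirely inside \eqref{measure-Bad-i}. The only point to check here is that the inclusion of $Bad_i$ into the set of Proposition \ref{prop-good-bad} holds with matching dilation factors $R^{2\delta}$ and matching cubes $Q_{j+1}(y)$, as argued before the statement.
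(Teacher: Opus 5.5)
Your proposal is correct and follows the paper's own argument. You integrate Lemma~\ref{lem-3.5-GIOW} against $\nu$, bound each $\mu\times\nu(Bad_i)$ by $(\log R_i)^2R_i^{-\delta^3}$ through the inclusion of $Bad_i$ into the exceptional set of Proposition~\ref{prop-good-bad} with $R=R_i$, and sum in $i$ after fixing $\delta$ and then taking $R_0$ large. Two side remarks in your commentary are slightly off, though neither affects the argument since the paper also simply invokes that proposition: the inclusion is not literally one of ``matching cubes'', because Proposition~\ref{prop-good-bad} normalizes by the dyadic cube $Q(y)\in\D_r$ while $Bad_i$ uses the dilated cube $R^{2(j+2)\delta^2}Q_{j+1}(y)$ and an $R^{2\delta}$-dilated tube (a comparison the paper leaves implicit too); and the factor $(\log R)^2$ there comes from the union over dyadic pairs $(r,r')$, not from counting the indices $j$.
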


Here is a more general result than what we need for \eqref{measure-Bad-i}.
\begin{prop}\label{prop-good-bad}
	Suppose $\mu,\nu$ are probability measures on $[0,1)^2$ with disjoint support, $I_s(\mu)<\infty$ for some $s>1$ and $\nu$ is
   Frostman of dimension $t>2-s$ with $\overline{\dim}_M \supp\nu<t+\delta^2$. Then, for all $R>1$ and $\delta>0$ small enough in terms of $s,t$, there exists a subset $Bad\subset [0,1)^2$ such that
	\begin{itemize}
		\item $\mu\times\nu(Bad)\lesssim (\log R)^2 R^{-\delta^3}$, and
		\item for all pairs of points $(x,y)\notin Bad$ and all dyadic numbers $r,r'$ with $R^{-1+\delta}\leq R^\delta r'\leq r\leq 1$, the $r'\times r$ tube $T_{r'\times r}(y;x)$ centered at $y$ parallel to the line $l_{x,y}$ connecting $x,y$ satisfies
		$$\nu_Q(T_{r'\times r}(y;x))< R^\delta\cdot (r'/r)^{\min\{t,1\}},$$
		where $Q=Q(y)\in\D_r$ is the dyadic $r$-cube containing $y$.
	\end{itemize}
	
\end{prop}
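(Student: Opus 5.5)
We define $Bad$ scale by scale and take a union bound. For each pair of dyadic scales $(r',r)$ with $R^{-1+\delta}\leq R^\delta r'\leq r\leq 1$ (there are $\lesssim(\log R)^2$ such pairs), let
$$Bad_{r',r}:=\{(x,y): \nu_{Q(y)}(T_{r'\times r}(y;x))\geq R^\delta (r'/r)^{\min\{t,1\}}\},$$
and set $Bad:=\bigcup_{r',r}Bad_{r',r}$. The second property then holds by construction. It suffices to prove $\mu\times\nu(Bad_{r',r})\lesssim R^{-\delta^3}$ uniformly in the scales. We split into two cases: cubes $Q\in\D_r$ with small $\nu$-mass, and the rest.

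\textbf{Step 1 (light cubes).} Since $\overline{\dim}_M\supp\nu<t+\delta^2$, the number of cubes $Q\in\D_r$ meeting $\supp\nu$ is $\lesssim r^{-t-\delta^2}$. Consider the cubes with $\nu(Q)\leq R^{\delta/2}\cdot$(the threshold produced in Step 2). Up to a loss of $r^{-\delta^2}$, these are the cubes where $\nu(Q)\lesssim r^{t}$ fails to be essentially sharp from below. Discarding all $y$ lying in cubes with $\nu(Q)<r^{t+2\delta^2}$ costs $\nu$-measure at most $r^{-t-\delta^2}\cdot r^{t+2\delta^2}=r^{\delta^2}$. This is $\leq R^{-\delta^3}$ as long as $r\leq R^{-\delta}$.

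When $r>R^{-\delta}$, the cube has size comparable to $1$ up to $R^{\delta}$ factors. In that range we instead use the trivial bound $\nu(Q)\gtrsim$ (something) only through the normalization, absorbing the loss into the $R^\delta$ in the threshold.

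On the remaining cubes we have $\nu(Q)\geq r^{t+2\delta^2}$. The condition defining $Bad_{r',r}$ then implies
$$\nu(T_{r'\times r}(y;x))\geq R^{\delta}r^{t+2\delta^2}(r'/r)^{\min\{t,1\}}\geq R^{\delta/2}\,r^{t}(r'/r)^{\min\{t,1\}},$$
that is, the rescaled tube is heavy.

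\textbf{Step 2 (rescaling and radial projections).} Rescale $Q$ to the unit square. The tube becomes an $(r'/r)\times 1$ tube through $y$ pointing toward $x$. Here $x$ may lie far outside $Q$, but only the direction of the tube matters. Write $\rho:=r'/r$. The bad pairs are those $(x,y)$ for which the radial projection of $\nu_Q$ from $y$ in direction $\pi^y(x)$ is $\rho$-concentrated. More precisely, $\nu_Q$ gives mass $\geq R^{\delta/2}\rho^{\min\{t,1\}}$ to the $\rho$-tube through $y$ in that direction.

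We move this concentration onto $E$ by symmetry of the incidence. The set of lines $l$ carrying $\nu_Q$-heavy $\rho$-tubes has a controlled size: by a Frostman/Furstenberg-type counting, the number of essentially distinct heavy $\rho$-tubes is $\lesssim R^{-\delta/2}\rho^{-\min\{t,1\}}$ times a bounded multiplicity. For fixed $y$, the $x$'s that are bad lie in the union of $\lesssim R^{-\delta/2}\rho^{-\min\{t,1\}}$ tubes through $y$ of angular width $\rho$.

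Orponen's estimate \eqref{Orponen-Lp} controls this. Apply it with the roles of the measures chosen so that the pushforward is $\pi^y_*\mu$: we have $I_s(\mu)<\infty$ with $s>1$, and $\nu$ has dimension $t>2-s$. This gives, for $\nu$-a.e.\ $y$ on average, $\pi^y_*\mu\in L^p(S^1)$ for some $p>1$. By Hölder, $\mu$ of a union of $N$ arcs of width $\rho$ is $\lesssim (N\rho)^{1-1/p}$.

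When $t\geq1$ we have $N\rho\lesssim R^{-\delta/2}$ and we are done. When $t<1$, the quantity $N\rho\lesssim R^{-\delta/2}\rho^{1-t}$ is even smaller.

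\textbf{Main obstacle.} The hardest step is the second one: making the counting of heavy tubes, and the transfer from $\nu_Q$-heavy tubes through $y$ to a small-measure set in the $\mu$ variable, rigorous. This transfer needs a version of \eqref{Orponen-Lp} localized to $Q$ and rescaled. Under rescaling the $s$-energy of $\mu$ changes, so we need to check that the $L^p$ bound survives with only an $R^{O(\delta^2)}$ loss. I would first apply \eqref{Orponen-Lp} to $\mu$ and to the rescaled $\nu_Q$ directly. I would use the Frostman bound $\nu_Q(B(z,\rho))\lesssim r^{-2\delta^2}\rho^{t}$, which holds because $\nu(Q)\geq r^{t+2\delta^2}$. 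If the loss is too large, I would fall back on a direct $L^2$ incidence count of $\rho$-tubes.
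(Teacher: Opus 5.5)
\textbf{There is a genuine gap: the counting claim in Step 2, on which the whole estimate rests, is wrong.}

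The bound $\lesssim R^{-\delta/2}\rho^{-\min\{t,1\}}$ on the number of $\nu_Q$-heavy $\rho$-tubes holds for tubes in a \emph{single} direction, since they have bounded overlap and $\nu_Q$ is a probability measure. This is exactly how the paper uses it in its Step 3. The tubes through a fixed $y$, however, point in $\approx\rho^{-1}$ different directions and all overlap near $y$, so the count does not transfer to them. Over all directions the number of heavy tubes can be $\approx R^{-\delta}\rho^{-2}$: take arclength on a circle and the near-tangent tubes. So the total is not $R^{-\delta/2}\rho^{-\min\{t,1\}}$.

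For $t<1$ the per-point bound you state is false. Let $\nu$ be $t$-regular on a segment through $y$. A $\rho$-tube through $y$ at angle $\alpha$ to the segment carries $\nu_Q$-mass $\approx(\rho/\alpha)^t$. Hence every direction within angle $R^{-O(\delta)}$ of the segment is heavy, and $N(y)\gtrsim R^{-O(\delta)}\rho^{-1}$, which is far larger than $R^{-\delta/2}\rho^{-t}$. So the bound $N\rho\lesssim R^{-\delta/2}\rho^{1-t}$ fails. Even for $t\geq 1$, where the final number has the right size, the argument you give for it is not valid.

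Your ``main obstacle'' is also misidentified. No localized or rescaled form of \eqref{Orponen-Lp} is needed, because the $\mu$-side only involves $\pi^y_*\mu$, which sees only directions.

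\textbf{How to repair it.} What is missing is a \emph{pointwise} count of heavy directions at $y$, via the truncated $1$-potential of $\nu_Q$.

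First fix the threshold in Step 1: remove cubes with $\nu(Q)<R^{-\delta^2}r^t$. This costs $\lesssim r^{-\delta^2}R^{-\delta^2}\leq R^{-\delta^3}$ for every $r\geq R^{-1+\delta}$. Your threshold $r^{t+2\delta^2}$ breaks down for $r>R^{-\delta}$, and the patch you propose there is not an argument.

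On the remaining cubes, $\nu_Q(B(y,a))\lesssim R^{\delta^2}(a/r)^t$. Take $\rho$-separated directions $e_i$ and $3r'\times r$ tubes $\widetilde T_i(y)$ centered at $y$; each $T_{r'\times r}(y;x)$ lies in one of them. Then
\[\sum_i\nu_Q(\widetilde T_i(y))\lesssim\int\min\{\rho^{-1},\,r/|z-y|\}\,d\nu_Q(z)\lesssim R^{\delta^2}\log(1/\rho)\,\rho^{\min\{t,1\}-1}.\]
Consequently:
\begin{enumerate}
\item at most $N(y)\lesssim R^{\delta^2-\delta}\log(1/\rho)\,\rho^{-1}$ directions are heavy;
\item every bad $x$ has $\pi^y(x)$ in a union of arcs of total length $\lesssim N(y)\rho\lesssim R^{\delta^2-\delta}\log R$;
\item your H\"older step with \eqref{Orponen-Lp}, applied once and globally to $\pi^y_*\mu$ with $y\sim\nu$, gives $\lesssim (R^{\delta^2-\delta}\log R)^{1/p'}$.
\end{enumerate}

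With this repair your route is a valid and somewhat shorter alternative to the paper's. The paper instead uses Orponen to discard pairs whose $\rho\times 1$ tube through $x$ towards $y$ is $\mu$-heavy. It then covers the remaining bad pairs by products $T_{\rho\times 1}\times T$ over $\nu_Q$-heavy $r'\times r$ tubes $T$, so that $\mu\times\nu_Q(Bad_{r,r',Q})\leq R^{\delta^2}\rho\sum_{T\ \text{heavy}}\nu_Q(T)$. Finally it bounds that sum by Cauchy--Schwarz, combining the per-direction count with the averaged incidence bound on $\sum_T\nu_Q(T)^2$. You would use the same Frostman input pointwise in $y$. This is legitimate because the lower bound on $\nu(Q)$ makes the Frostman constant of $\nu_Q$ uniform.
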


\begin{proof}[Proof of Proposition \ref{prop-good-bad}]
Fix $r',r$.

{\bf Step 1.} We first consider pairs $(x,y)\in\supp\mu\times\supp\nu$ with
$$\mu(T_{r/r'\times 1}(x;y))>R^{\delta^2}\cdot r'/r,$$
and denote the set of such pairs by $Bad_{r, r',\mu}$, which is an open subset of $[0,1)^2$. The estimate on $\mu\times\nu(Bad_{r, r',\mu})$ is the same as Lemma 3.6 in  \cite{GIOW18}, just with $\mu,\nu$ swapped. We give the proof for completeness. 

For each $y\in\supp\nu$, as $\dist(y, \supp\mu)\approx 1$, one can cover $\supp\mu$ by $\approx r/r'$ many $r'/r\times 1$ tubes containing $y$ of bounded overlapping. Denote this collections of tubes by $\T_{y, r'/r\times 1}$. Then, for each $x$ with $(x,y)\in Bad_{r, r',\mu}$, there exists $T\in \T_{y, r'/r\times 1}$ such that 
$$T_{r/r'\times 1}(x;y)\subset 2T.$$
Because of bounded overlapping, there are $\lesssim R^{-\delta^2}\cdot r/r'$ many $T\in \T_{y, r'/r\times 1}$ with
$$\mu(2T)> R^{\delta^2}\cdot r'/r.$$
Consequently, for each $y\in\supp\nu$,
$$\{x:(x,y)\in Bad_{r, r',\mu}\}$$
is contained in a union of $\lesssim R^{-\delta^2}\cdot r/r'$ many $r'/r\times 1$ tubes containing $y$. Therefore, if we denote $\pi^y(x):=\frac{y-x}{|y-x|}$ as the radial projection, then
$$\mu\times\nu(Bad_{r, r',\mu})=\int \mu(\{x:(x,y)\in Bad_{r, r',\mu}\})\,d\nu(y)= \int \left(\int \pi^y_*\mu\right) d\nu(y),$$
and for each $y\in\supp\nu$, the support of $\pi^y_*(\mu)$ is contained in $\lesssim R^{-\delta}\cdot r/r'$ many $r'/r$-arcs in $S^1$. Hence, since $s>1$, by Orponen's radial projection estiamte \eqref{Orponen-Lp} (with $\mu,\nu$ swapped), there exists $p>1$ such that
$$\mu\times\nu(Bad_{r, r',\mu})\lesssim R^{-\delta^2/p'} \cdot \|\pi^y_*\mu\|_{L^p(S^1\times\nu)}\lesssim R^{-\delta^2/p'}I_s(\mu)^{\frac{1}{2}}I_{2-s}(\nu)^{\frac{1}{2p}}\lesssim R^{-C \delta^2},$$
where the constant $C$ in the exponent only depends on $s,t$.

As a remark, the existence of some $p>1$ is sufficient here. We refer to \cite{Liu24} for a discussion on the range of $p$.

\medskip
{\bf Step 2.} Remove those $Q\in\D_r$ with $\nu(Q)<R^{-\delta^2}r^t$. More precisely, as 
$$\overline{\dim}_M\supp\nu<t+\delta^2,$$
the covering number satisfies
$$N(\supp\nu, r)\lesssim r^{-t-\delta^2},$$
which implies 
$$\#\{Q\in\D_r: Q\cap \supp\nu\neq\emptyset\}\lesssim r^{-t-\delta^2}.$$
Therefore,
$$\sum_{Q\in\D_r,\,\nu(Q)<R^{-\delta^2}r^t}\nu(Q)\lesssim r^{-t-\delta^2}\cdot R^{-\delta^2}r^t\lesssim R^{(1-\delta)\delta^2-\delta^2}\lesssim R^{-\delta^3}.$$

Together with Step 1, if we denote
$$Bad_{r, r',\mu, \nu}:=Bad_{r, r',\mu}\cup \left([0,1)\times \bigcup_{Q\in\D_r,\,\nu(Q)< R^{-\delta^2}r^t}Q\right),$$
then
$$\mu\times\nu(Bad_{r, r',\mu, \nu})\lesssim R^{-\delta^3}$$
when $\delta>0$ is small enough in terms of $s,t$.

\medskip
{\bf Step 3.} On $[0,1)^2\backslash Bad_{r, r',\mu,\nu}$, write 
$$\mu\times\nu=\sum_{Q\in\D_r}\nu(Q)\cdot\mu\times\nu_Q.$$

For each $Q\in\D_r$ and $y\in Q$, consider pairs $(x,y)\in[0,1)\times Q$ with
\begin{equation}\label{bad-r'-r-tube}\nu_Q(T_{r'\times r}(y;x))> R^{\delta}\cdot (r'/r)^{\min\{t,1\}},\end{equation}
and denote the set of these pairs by $Bad_{r, r',Q}\subset (Bad_{r, r', \mu, \nu})^c$. 

Choose $\approx r/r'$ many $r'/r$-separated directions $\{e_i\}\subset S^1$. For each selected direction $e_i$, cover $Q$ by $\approx r/r'$ many $r'\times r$ tubes parallel to $e_i$ of bounded overlapping. Denote the collection of all selected $r'\times r$ tubes by $\T_{Q, r'\times r}$.

Here comes the key observation in this step: for each $(x,y)\in Bad_{r, r',Q}$, there exist $T\in \T_{Q, r'\times r}$, an associated $r'/r\times 1$ tube of the same central line of $T_{r'/r\times 1}$, denoted by $T_{r'/r\times 1}\sim T$, such that 
$$T_{r'\times r}(y;x)\subset 2T \text{ and } x\in T_{r'/r\times 1}.$$

This observation implies that, if we call $T\in \T_{Q, r'\times r}$ bad when $$\nu(2T)> R^{\delta}\cdot (r'/r)^{\min\{t,1\}},$$ then
$$Bad_{r, r',Q}\subset \bigcup_{\substack{T\in \T_{Q, r'\times r}, \,bad\\T_{r'/r\times 1}\sim T}}T_{r'/r\times 1}\times T.$$
Since $(x,y)\notin Bad_{r, r', \mu}$, by our construction from Step 1, it follows that
$$\mu(T_{r'/r\times 1})< R^{\delta^2}\cdot r'/r.$$
Therefore,
$$\mu\times\nu_Q(Bad_{r, r',Q})\leq \sum_{\substack{T\in \T_{Q, r'\times r}, \,bad\\T_{r'/r\times 1}\sim T}}\mu(T_{r'/r\times 1})\nu_Q(T)\leq R^{\delta^2}\cdot r'/r \sum_{T\in \T_{Q, r'\times r}, \,bad}\nu_Q(T)$$
$$=R^{\delta^2}\cdot r'/r \sum_{\substack{e_i\in S^1 \\r'/r\text{-separated}}} \sum_{\substack{T\in \T_{Q, r'\times r}\\T\parallel e_i, \,bad}} \nu_Q(T).$$
As tubes in $\T_{Q, r'\times r}$ have bounded overlapping, by \eqref{bad-r'-r-tube}, for each $e_i$, the number of $T$, in the sum is $\lesssim R^{-\delta} (r/r')^{\min\{t,1\}}$. So, by Cauchy-Schwarz,
$$\begin{aligned}&\left(\mu\times\nu_Q(Bad_{r, r',Q})\right)^2\\\lesssim &  (R^{\delta^2} r'/r)^2\cdot r/r'\cdot R^{-\delta} (r/r')^{\min\{t,1\}}\sum_{\substack{e_i\in S^1 \\r'/r\text{-separated}}} \sum_{\substack{T\in \T_{Q, r'\times r}\\T\parallel e_i, \,bad}} \nu_Q(T)^2\\
\leq &R^{2\delta^2-\delta} (r'/r)^{1-\min\{t,1\}} \sum_{T\in \T_{Q, r'\times r}} \nu_Q(T)^2\\
= &R^{2\delta^2-\delta} (r'/r)^{1-\min\{t,1\}} \iint \sum_{T\in \T_{Q, r'\times r}}\chi_{T}(y)\chi_{T}(y')\, d\nu_Q(y)\,d\nu_Q(y').
\end{aligned}$$

Fix $y'$ and consider $|y-y'|\approx 2^{-j}$ with $2^{-j}\in[r', r]$. Then the number of $T\in \T_{Q, r'\times r}$ containing both $y,y'$ is $\lesssim 2^jr$. Also, since $\nu$ is Frostman of dimension $t$ and $\nu(Q)\geq R^{-\delta^2}r^t$ from Step 2, it follows that
$$\nu_Q(B(y, 2^{-j}))\lesssim R^{\delta^2} (2^jr)^{-t}.$$
Together we have
$$\begin{aligned}\left(\mu\times\nu_Q(Bad_{r, r',Q})\right)^2 \lesssim & R^{4\delta^2-\delta} (r'/r)^{1-\min\{t,1\}} \left((r'/r)^t+\sum_{j:2^{-j}\in[r', r]} 2^jr\cdot (2^jr)^{-t}\right)\\\lesssim &R^{4\delta^2-\delta},\end{aligned}$$
which is $\lesssim R^{-\delta^2}$ when $\delta<1/10$.

\medskip
{\bf Step 4.} Finally take
$$Bad:=\bigcup_{\substack{r',r:\, dyadic\\R^{-1+\delta}\leq R^\delta r'\leq r\leq 1}} \bigcup_{Q\in\D_r}Bad_{r,r',Q}\cup Bad_{r,r',\mu,\nu}.$$
From previous steps,
$$\mu\times\nu(Bad)\lesssim \sum_{r,r'}\left(R^{-\delta^3}+\sum_{Q\in\D_r}\nu(Q)R^{-\delta^3}\right)\lesssim (\log R)^2 R^{-\delta^3}$$
when $\delta>0$ is small enough in terms of $s,t$, and all required conditions are satisfied.
\end{proof}

\section{Multi-scale Mizohata-Takeuchi-type estimates}\label{sec-L2}

\subsection{A  heuristic proof of Proposition \ref{thm-warmup}}\label{sec-proof-warmup}
The idea behind Proposition \ref{thm-warmup} is an $L^2$ ball inflation. A heuristic proof is given in this subsection that ignores many $\delta$-power-loss. We use the symbol $\lessapprox$ for this ignorance. A rigorous argument on this idea will be given in the next subsection.

Let $\psi\in C_0^\infty$ be non-negative with $\hat{\psi}\neq 0$ on the unit ball. Denote $\psi_R(\cdot):=R^d\psi(R \cdot)$. Then, as the integral is over $B_R$,
$$\int_{B_R}|\widehat{f\,d\sigma}|^2w\lesssim \int_{B_R}|\widehat{(f\,d\sigma)*\psi_R}|^2w:=\int_{B_R}|F|^2w.$$
 For each $0\leq j\leq \log1/\delta-1$, divide the unit sphere into  $r_j^{-1}$-caps $\theta_j$ and let $\psi_{\theta_j}$ be a partition of unity subordinate to this cover. Denote
$$F_{\theta_j}:= \widehat{(f\psi_{\theta_j}d\sigma)*\psi_R}.$$

First, by the localization, we may assume $w$ is a constant on each $r_0$-cube, namely,
\begin{equation}\label{heuristic-localization}\|F\|_{L^2(w)}^2\lessapprox \sum_{Q_0\in\D_{r_0}}w(Q_0)\int |F|^2dm_{Q_0}.\end{equation}

Second, as the supports of $\widehat{F_{\theta_0}}$ lie in almost disjoint $r_0^{-1}$-balls, the local $L^2$-orthogonality between $F_{\theta_0}$s on each $Q_0\in\D_{r_0}$ implies
\begin{equation}\label{heuristic-orthogonality}\int |F|^2dm_{Q_0}\lessapprox\sum_{\theta_0: r_0^{-1}\text{-caps}}\int |F_{\theta_0}|^2dm_{Q_0}.\end{equation}

Third, as $\widehat{F_{\theta_0}}$ is supported on a $r_0^{-1}\times\cdots\times r_0^{-1}\times r_1^{-1}$ rectangle perpendicular to $\theta_0$, the function $F_{\theta_0}$ is locally a constant on each $r_0\times \cdots\times r_0\times r_1$ tube parallel to $\theta_0$. Therefore, if we cover $Q_1\in\D_{r_1}$ by $r_0\times \cdots\times r_0\times r_1$ tubes $T_0$ parallel to $\theta_0$ with bounded overlapping, then
$$\begin{aligned}\sum_{Q_0\subset Q_1}w(Q_0)\int |F_{\theta_0}|^2dm_{Q_0}& =\sum_{T_0\parallel\theta_0}\sum_{Q_0\subset T_0}w(Q_0)\int |F_{\theta_0}|^2dm_{Q_0}\\
&\lessapprox \sum_{T_0\parallel\theta_0}\sum_{Q_0\subset T_0}w(Q_0)\|F_{\theta_0}\|_{L^\infty(T_0)}^2\\
&=\sum_{T_0\parallel\theta_0}w(T_0)\|F_{\theta_0}\|_{L^\infty(T_0)}^2\\
&\lessapprox w(Q_1)\sum_{T_0\parallel\theta_0}\frac{w_{Q_1}(T_0)}{m_{Q_1}(T_0)}\int_{T_0} |F_{\theta_0}|^2dm_{Q_1},
\end{aligned}$$
which is, because of $m_{Q_1}(T_0)\approx (r_1/r_0)^{d-1}$,
$$\lessapprox (r_0/r_1)^{d-1} \left(\sup_{\substack{\forall\,Q_{1}\in\D_{r_1}\\\forall\,T_0\in\T_{r_0, r_1}}}w_{Q_1}(T_0)\right)w(Q_1)\int |F_{\theta_0}|^2dm_{Q_1}.$$

Together with \eqref{heuristic-localization} and \eqref{heuristic-orthogonality}, we obtain an $L^2$ ball inflation that reduces the spatial scale from $r_0$ to $r_1$:
$$\|F\|_{L^2(w)}^2\lessapprox\sum_{Q_0\in\D_{r_0}}w(Q_0)\int |F|^2dm_{Q_0}$$
$$\lessapprox (r_1/r_0)^{d-1} \left(\sup_{\substack{\forall\,Q_{1}\in\D_{r_1}\\\forall\,T_0\in\T_{r_0, r_1}}}w_{Q_1}(T_0)\right)\sum_{\theta_0}\sum_{Q_1\in\D_{r_1}}w(Q_1)\int |F_{\theta_0}|^2dm_{Q_1}.$$

By iterating this process, we end up with
$$\|F\|_{L^2(w)}^2\lessapprox R^{d-1} \left(\prod_{j=0}^{M-1} \sup_{\substack{\forall\,Q_{j+1}\in\D_{r_{j+1}}\\\forall\,T_j\in\T_{r_j, r_{j+1}}}} w_{Q_{j+1}}(T_j)\right)w(B_R)\sum_{\theta: R^{-1/2}\text{-caps}} \int|F_\theta|^2dm_{B_R},$$
and Proposition \ref{thm-warmup} follows by the standard ``Plancherel \& Cauchy-Schwarz'' argument:
$$\begin{aligned}\sum_{\theta: R^{-1/2}\text{-caps}} \int|F_\theta|^2dm_{B_R}\lesssim & \,R^{-d} \sum_{\theta: R^{-1/2}\text{-caps}} \int|\widehat{(f\psi_{\theta}d\sigma)*\psi_R}|^2dm\\
=& \,R^{-d} \sum_{\theta: R^{-1/2}\text{-caps}} \int|(f\psi_{\theta}d\sigma)*\psi_R|^2dm\\
 \lesssim & \,R^{-(d-1)}\sum_{\theta: R^{-1/2}\text{-caps}} \int|f\psi_\theta(\sigma)|^2\left(\int |\psi_R(x-\sigma)|dx\right)d\sigma\\ \lesssim &\,R^{-(d-1)}\int|f|^2d\sigma.\end{aligned}$$

\subsection{$L^2$ estimates of the good part}\label{subsec-L2-good}

\begin{prop}\label{prop-our-L2-good}
    Suppose $\mu,\nu$ are probability measures on the unit ball with disjoint support, $I_s(\mu)<\infty$ for some $s>1$, and $\nu$ is
   Frostman of dimension $t>2-s$ with $\overline{\dim}_M \supp\nu<t+\delta^2$. Then, with $\mu_{good,y}$ defined in \eqref{def-mu-good-y}, when $\delta>0$ is small enough in terms of $s,t$,
$$\int \| d^y_*(\mu_{good,y}) \|_{L^2(\R)}^2d\nu(y)<\infty.$$    
\end{prop}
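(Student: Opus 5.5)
We reduce Proposition \ref{prop-our-L2-good} to a dyadic-scale estimate and then run the rigorous version of the $L^2$ ball inflation of Section \ref{sec-proof-warmup}.

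\textbf{Reduction to single scales.} Write $\mu_{good,y}=M_0\mu+\sum_i\mu_{good,i,Q(y)}$. The pieces $d^y_*(\mu_{good,i,Q(y)})$ are essentially frequency localized at $|\cdot|\approx R_i$ in the distance variable, up to $\RD(R_i)$ tails. Hence by almost orthogonality in $i$ it suffices to show, for each $i$ with $R=R_i$,
\[
\int\|d^y_*(\mu_{good,i,Q(y)})\|_{L^2(\R)}^2\,d\nu(y)\lesssim R^{-\epsilon}
\]
for some $\epsilon=\epsilon(s,t)>0$, once $\delta$ is small. The $M_0\mu$ term is trivially bounded.

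\textbf{Averaging the spherical integral.} Following \cite{Liu18} and \cite{GIOW18}, express $\|d^y_*(g)\|_{L^2}^2$ through the spherical average
\[
\int |\hat g(\xi)|^2\,|\xi|^{-1}\ldots,
\]
that is, by $R^{-1}\int_{|\xi|\approx R}|\widehat{\sigma_{|\xi|}}\cdots|$. Concretely, for $g$ frequency supported in $|\xi|\approx R$, we use
\[
\int\|d^y_*g\|_2^2\,d\nu(y)\lesssim R^{-1}\int_{1}^{2}\!\int |\widehat{\hat g\,d\sigma_{Rr}}(y)|^2\,d\nu(y)\,dr .
\]
Here $\hat g$ restricted to the circle of radius $Rr$ is a sum over $R^{-1/2}$ caps $\theta$ of the wave packets $M_T\mu$ with $T$ good for $Q(y)$.

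Since goodness depends on $y$ only through the $R^{-1+\delta}$-cube $Q(y)$, we localize. We decompose $\nu=\sum_{Q_0\in\D_{r_0/R}}\nu(Q_0)\nu_{Q_0}$ and on each $Q_0$ work with the fixed function $F^{Q_0}=\sum_{T\notin\T_{i,Q_0}}$ of wave packets. After rescaling by $R$, $\nu$ plays the role of the weight $w$ in Proposition \ref{thm-warmup}, at scales $r_0<r_1<\cdots<r_M=R^{1/2}\cdot$ (the cap scales). Indeed $r_{j+1}=r_j^2$ matches the hypothesis there.

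\textbf{Ball inflation with good caps.} We iterate the three steps (localization, local $L^2$ orthogonality over $\theta_j$, local constancy on $r_j\times r_{j+1}$ tubes) going from $Q_j$ to $Q_{j+1}$. The key point is that at step $j$ only caps $\theta_j\notin\Theta_j(Q_j)$ survive in $F^{Q_0}$. For these caps, definition \eqref{def-bad-Q-r-j} gives exactly
\[
w_{Q_{j+1}}(T_j)=\nu_{Q_{j+1}}(T_j)\le R^{10\delta}(r_j/r_{j+1})^{\min\{t,1\}}
\]
for the relevant tube through $Q_j$. The enlargements $R^{2(j+2)\delta^2}$ and $R^\delta$ in that definition are there precisely to absorb the $R^\epsilon$-dilations from localization and local constancy.

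There is one subtlety. The set of surviving caps is inherited: a cap removed at coarser scale $j$ is removed for all finer subcubes by \eqref{def-Theta-j-Q-j'}. So the function being inflated at step $j$ is a constant function on $Q_{j+1}$ after summing over $\theta_j$, and the orthogonality step is legitimate. The product over $j$ telescopes to
\[
R^{C\delta}\prod_j(r_j/r_{j+1})^{\min\{t,1\}}\approx R^{C\delta}R^{-\min\{t,1\}/2}.
\]

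\textbf{Final bound.} With the Plancherel and Cauchy--Schwarz step at the top scale and $\|\hat\mu\|^2$ on the $R$-annulus (which is $\lesssim R^{2-s}I_s(\mu)$ after the $R^{-1}$ normalization), we get
\[
\int\|d^y_*(\mu_{good,i,Q(y)})\|_2^2\,d\nu\lesssim R^{C\delta}R^{1-\min\{t,1\}}\cdot R^{1-s}\cdot\ldots
\]
This is $\lesssim R^{C\delta-(s+t-2)}$ when $t\le 1$. When $t>1$ the exponent is governed by $s>1$ instead. Either way it is summable over dyadic $R_i$ for $\delta$ small.

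\textbf{Main obstacle.} The hardest part is the bookkeeping in the rigorous inflation. The weights $\nu_{Q_{j+1}}$ are normalized on different cubes at each scale, and the Frostman and Minkowski lower bound $\nu(Q)\gtrsim R^{-\delta^2}r^t$ (as in Step 2 of Proposition \ref{prop-good-bad}) is needed to pass between $\nu(Q_j)$ and $\nu(Q_{j+1})$. The $\RD$ errors and the $y$-dependence of the good set must also be controlled uniformly across roughly $\log(1/\delta)$ iterations.
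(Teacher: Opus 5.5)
Your outline is essentially the paper's proof: Liu's $L^2$-identity with orthogonality in $i$, then freezing the good caps on each $Q_0\in\D_{r_0/R}$, then iterating the $L^2$ ball inflation using that $\Theta_j(Q_k)=\Theta_j(Q_{k+1})$ for $j\geq k+1$ (Lemmas~\ref{lem-rewrite-good-part}, \ref{lem-relation} and \ref{lem-reduction}), and finally Plancherel with $\int_{|\xi|\approx R}|\hat\mu|^2\lesssim R^{2-s}I_s(\mu)$. Only the bookkeeping needs fixing: the spatial scales run up to $r_{\log 1/\delta}/R=1$ (the last tubes are $R^{-1/2}\times 1$), so the tube weights multiply to $(r_0/R)^{\min\{t,1\}}\approx R^{-\min\{t,1\}}$ rather than $R^{-\min\{t,1\}/2}$; with the Lebesgue factors this gives per-step losses $(r_k/r_{k+1})^{\min\{t,1\}-1}$ telescoping to $R^{1-\min\{t,1\}}$ up to $R^{C\delta\log(1/\delta)}$, as your final line uses. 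Also, no lower bound on $\nu(Q)$ is needed here, since passing from $Q_k$ to $Q_{k+1}$ only uses $\nu(T)=\nu_{Q_{k+1}}(T)\,\nu(Q_{k+1})$ together with the good-tube condition.
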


By the observation in \cite{Liu18}, for each $y\in\supp\nu$,
$$ \| d^y_*(\mu_{good,y}) \|_{L^2(\R)}^2 = \int_0^\infty | \mu_{good,y} * \sigma_t (y)|^2 t^2 dt,$$
where $\sigma_t$ denotes the normalized arc-length measure on $tS^1$. This is
$$\lesssim \int_0^\infty | \mu_{good,y} * \sigma_t (y)|^2 t dt $$
because $\mu_{good,y}$ is essentially supported near $\supp\mu$, away from $y\in\supp\nu$ (see Lemma 5.2 in \cite{GIOW18}). Then, by the $L^2$-identity proved in \cite{Liu18}, for every $y \in \R^2$, 
$$\int_0^\infty | \mu_{good,y} * \sigma_t (y)|^2 t dt=\int_0^\infty | \mu_{good,y} * \hat{\sigma_r} (y)|^2 r dr.$$
By the definition of $\mu_{good, i, Q(y)}$, $\mu_{good, y}$ in \eqref{def-good-i-Q}, \eqref{def-mu-good-y}, it suffices to show
$$\int_0^\infty | \mu_{good,i, Q(y)} * \hat{\sigma_r} (y)|^2 r dr\approx \int_{r\approx R_i} | \mu_{good,i, Q(y)} * \hat{\sigma_r} (y)|^2 rdr+\RD(R_i)$$
is summable in $i\geq 1$, where the right hand side follows because the Fourier support of $\mu_{good,i, Q(y)}$ lies in the annulus $|\xi|\approx R_i$.

Notice that the dependence on $y$ in $\mu_{good, y}$ has not yet made any difference because so far everything is pointwise. 

Now we fix $R\approx R_i$ and consider the integral with respect to $d\nu(y)$:
\begin{equation}\label{L2-good-formulation}\int | \mu_{good,i, Q(y)} * \hat{\sigma_R} (y)|^2 d\nu(y)\lesssim \int | ((\widehat{\mu_{good,i, Q(y)}} d\sigma_R)*\psi)^\vee (y)|^2 d\nu(y),\end{equation}
where $\psi\in C_0^\infty$ is non-negative with $\hat{\psi}\neq 0$ on the unit ball, and the right hand side follows because $\supp\nu\subset B_1$.

To deal with the dependence on $Q(y)$, a careful analysis on bad caps associated with different $y$ is required. 

Let $r_j$ be as in \eqref{def-r-j} and recall the definition of $\Theta_j$ in \eqref{def-Theta-j-Q-j}, \eqref{def-Theta-j-Q-j'}. Notice that \begin{equation}\label{compare-Theta-j}\Theta_j(Q_{j_1})=\Theta_j(Q_{j_2})=\Theta_j(Q_j)\end{equation}
for all $Q_{j_1}\in \D_{r_{j_1}/R}$, $Q_{j_2}\in \D_{r_{j_2}/R}$ contained in the same $Q_{j}\in \D_{r_{j}/R}$, $\forall j_1, j_2\leq j$.

For $0\leq k<j\leq\log1/\delta-1$ and $Q_k\in\D_{r_k/R}$, let
\begin{equation}\label{def-Theta-=j}\Theta_{=j}(Q_k):=\{\theta_j\in\Theta_{j}(Q_k): \text{disjoint with all caps in }\Theta_k(Q_k),\dots,\Theta_{j-1}(Q_k)\}.\end{equation}

Now we can rewrite the integrand in \eqref{L2-good-formulation} to the following.

\begin{lem}
    \label{lem-rewrite-good-part}
    Recall $\psi_{i, \theta}$ defined in \eqref{def-psi-i-theta}, and let
$$F:= ((\hat{\mu}\,d\sigma_R)*\psi)^\vee, \ F_\theta:=((\psi_{i, \theta}\hat{\mu}\,d\sigma_R)*\psi)^\vee,\ F_{\theta_j}:=\sum_{\theta\subset\theta_j} F_\theta.$$
Then for every $Q\in\D_{r_0/R}$ and $y\in Q$,
$$((\widehat{\mu_{good,i, Q}} d\sigma_R)*\psi)^\vee (y)$$$$=F(y)-\sum_{\theta_0\in\Theta_0(Q)} F_{\theta_0}(y)-\sum_{j=1}^{\log1/\delta-1} \sum_{\theta_j\in\Theta_{=j}(Q)} F_{\theta_j}(y)+\RD(R).$$
\end{lem}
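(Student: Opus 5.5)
The identity is bookkeeping: we rewrite the index set $\T_{i,Q}$ of removed tubes as a disjoint union of caps. By \eqref{def-good-i-Q},
$$\mu_{good,i,Q}=\sum_{T\in\T_i}M_T\mu-\sum_{T\in\T_{i,Q}}M_T\mu .$$
Membership of $T$ in $\T_{i,Q}$ depends only on two things: whether $2T\cap Q\neq\emptyset$, and whether the direction $\theta(T)$, equivalently its $R^{-1/2}$-cap $\theta$, lies in some bad cap $\theta_j\in\Theta_j(Q)$ for some $j$.

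First, let $\mathcal{B}(Q)$ be the set of $R^{-1/2}$-caps $\theta$ contained in $\bigcup_j\bigcup_{\theta_j\in\Theta_j(Q)}\theta_j$. The caps are nested: each $\theta_j$ is a union of $r_{j+1}^{-1}$-caps, and ultimately of $R^{-1/2}$-caps. Hence each $\theta\in\mathcal{B}(Q)$ lies in a unique smallest-index bad cap. Either it is some $\theta_0\in\Theta_0(Q)$, or it is a $\theta_j\in\Theta_j(Q)$ disjoint from all caps in $\Theta_0(Q),\dots,\Theta_{j-1}(Q)$. Nested dyadic caps are either disjoint or contained one in the other, so "not contained in any earlier bad cap" is the same as "disjoint from all of them". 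Thus
$$\mathcal{B}(Q)=\bigsqcup_{\theta_0\in\Theta_0(Q)}\{\theta\subset\theta_0\}\ \sqcup\ \bigsqcup_{j\ge1}\bigsqcup_{\theta_j\in\Theta_{=j}(Q)}\{\theta\subset\theta_j\}.$$
Two caps in $\Theta_{=j}(Q)$ with the same $j$ are distinct caps of one dyadic generation, hence disjoint. So every $\theta\in\mathcal B(Q)$ is counted exactly once on the right-hand side.

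Second, we compare $\sum_{T\in\T_{i,Q}}M_T\mu$ with $\sum_{\theta\in\mathcal{B}(Q)}\sum_{T\in\T_{i,\theta}}M_T\mu=\sum_{\theta\in\mathcal B(Q)}\sum_{T\in\T_{i,\theta}}\eta_T(\psi_{i,\theta}\hat\mu)^\vee$. The missing tubes are those with $2T\cap Q=\emptyset$, and for these we need the contribution at $y\in Q$ to be negligible after applying $(\widehat{\,\cdot\,}d\sigma_R*\psi)^\vee$ and evaluating at $y$. Such $T$ has $\dist(y,T)\gtrsim R^{-1/2+\delta}$, while $\eta_T$ decays rapidly off $T$ at that scale. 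The operation $g\mapsto((\hat g\,d\sigma_R)*\psi)^\vee(y)$ is $g*(\hat\sigma_R\cdot\psi^\vee)$ evaluated at $y$, where $\psi^\vee$ is a Schwartz function. It therefore only sees $g$ near $y$ up to Schwartz tails, giving $\RD(R)$. The cleanest route is as follows. First restrict attention to tubes with $2T\cap Q\ne\emptyset$ modulo $\RD(R)$. Then observe that for those tubes and $\theta\in\mathcal B(Q)$, the condition "$\theta(T)$ lies in a bad cap" is automatic. Consequently $\sum_{T\in\T_{i,\theta}}\eta_T=1$ near $Q$, so $\sum_{T\in\T_{i,\theta}}M_T\mu$ agrees with $(\psi_{i,\theta}\hat\mu)^\vee$ near $y$.

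Third, we apply the map $g\mapsto((\hat g\,d\sigma_R)*\psi)^\vee$ and evaluate at $y$. The total $\sum_T M_T\mu$ over $T\in\T_i$ equals $(\sum_\theta\psi_{i,\theta}\hat\mu)^\vee$ on $B_2$. Near $|\xi|=R$ we have $\sum_\theta\psi_{i,\theta}=1$, since $\psi_0$ and the other annuli vanish there. This uses that $R\approx R_i$ sits well inside the $i$-th annulus, or we absorb neighbouring $i$ harmlessly. Hence the first sum gives $F(y)+\RD(R)$. Each cap piece gives $F_\theta(y)$, and grouping by the decomposition from the first step yields $F_{\theta_0}$ and $F_{\theta_j}$.

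The main obstacle is the spatial-localization step: passing from the smooth cutoffs $\eta_T$ to $1$ near $y$, and discarding tubes with $2T\cap Q=\emptyset$. Both require a uniform Schwartz-tail estimate for the kernel $\hat\sigma_R\psi^\vee$ convolved against $\eta_T$ at scale $R^{-1/2+\delta}$. I would handle this exactly as in Lemma \ref{lem-3.5-GIOW}, using the essential supports of $\eta_T$ and $\hat{\psi_{i,\theta}}$ and summing the rapidly decaying errors over the $\lesssim R^{O(1)}$ tubes.
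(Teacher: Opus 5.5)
Your bookkeeping is right and is the paper's own. Sorting the removed $R^{-1/2}$-caps by the first index at which they fall into a bad cap is exactly the splitting of $\T_{i,Q}$ into $\T_{i,Q,0}$ and the sets $\{T\in\T_{i,Q,j}:\theta(T)\in\Theta_{=j}(Q)\}$. As you say, the lemma then reduces to \eqref{fast-decay-on-Q}: wave packets with $2T\cap Q=\emptyset$ contribute $\RD(R)$ on $Q$.

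The gap is in how you justify that estimate, which is the only analytic content of the lemma. The map $g\mapsto((\hat g\,d\sigma_R)*\psi)^\vee(y)$ is not convolution with a Schwartz kernel. The inverse Fourier transform of a convolution is a product, so the map is $g\mapsto\psi^\vee(y)\,(g*\widehat{\sigma_R})(y)$. Here $\widehat{\sigma_R}(z)=\hat\sigma(Rz)$ decays only like $(R|z|)^{-1/2}$ while oscillating, so it is not localized at scale $R^{-1/2+\delta}$; your kernel $\widehat{\sigma_R}\psi^\vee$ would at best be localized at scale $1$. More to the point, spatial localization cannot be the mechanism. $M_T\mu=\eta_T\,(\psi_{i,\theta}\hat\mu)^\vee$ is concentrated on $T$ within $O(R^{-1/2})$ of $\supp\mu$, hence at distance $\gtrsim 1$ from $y\in\supp\nu$ for \emph{every} $T$. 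An argument of the form ``the operator only sees $g$ near $y$'' would therefore make all wave packets negligible, including those through $Q$ that build $F(y)$. For the same reason, knowing that $\sum_{T\in\T_{i,\theta}}M_T\mu$ agrees with $(\psi_{i,\theta}\hat\mu)^\vee$ only near $y$ would be useless. What you need is agreement on $B_2$, where $\mu$ lives, and this does hold since $\sum_T\eta_T=1$ on $B_2$.

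The missing idea is directional separation, which is what the paper uses. Write the quantity as an integral in $x,\sigma,\xi$. Integration by parts in $x$ (the frequency localization of $M_T\mu$) confines $\sigma$ to an $O(R^{-1/2})$-arc around $\theta$. For $x\in T$ and $y\in Q$ with $2T\cap Q=\emptyset$, the point $y$ stays at distance $\gtrsim R^{-1/2+\delta}$ from every line through $x$ with direction in that arc. Hence the $\sigma$-derivative $R(y-x)\cdot\sigma^\perp$ of the phase $R(y-x)\cdot\sigma$ has size $\gtrsim R^{1/2+\delta}$, while the angular cutoff is smooth at scale $R^{-1/2}$. Each integration by parts in $\sigma$ gains $R^{-\delta}$, giving $\RD(R)$. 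Put differently, $(\widehat{M_T\mu}\,d\sigma_R)^\vee$ propagates along the core line of $T$ and is negligible off $2T$.

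Your fallback to Lemma \ref{lem-3.5-GIOW} can be made rigorous, but for this reason rather than a Schwartz tail. Since $\widehat{\sigma_R}$ is radial with $|\widehat{\sigma_R}|\le 1$, one has $|(g*\widehat{\sigma_R})(y)|\le\|d^y_*g\|_{L^1(\R)}$. The second half of that proof is precisely the angular argument. It is stated for $y\notin 4T$, so you must check it for $y\notin 2T$; it goes through, since $R^{-1/2+\delta}\gg R^{-1/2}$.

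Finally, $\sum_\theta\psi_{i,\theta}\equiv 1$ on $RS^1$ is not available, because the rectangles for consecutive $i$ overlap. Read $F$ as $\sum_\theta F_\theta=F_{\theta_{-1}}$, which is the only form used afterwards.
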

\begin{proof}[Proof of Lemma \ref{lem-rewrite-good-part}]
    First, by the definition of $\T_{i, Q, j}, \T_{i, Q}, \mu_{good, i, Q}$ in \eqref{def-T-i-Q-j}-\eqref{def-good-i-Q},
    $$\mu_{good, i, Q}=\sum_{T \in \T_i} M_T\mu-\sum_{T \in \T_{i, Q, 0}} M_T \mu-\sum_{j=1}^{\log1/\delta-1}\sum_{\substack{T \in \T_{i, Q, j}\\\theta(T)\in\Theta_{=j}(Q)}} M_T \mu,$$
where $\theta(T)\in\Theta_{=j}(Q)$ means $\theta(T)$ lies in some $\theta_j\in \Theta_j(Q)$. Then, by the definition of $\T_{i, Q, j}$ in \eqref{def-T-i-Q-j}, Lemma \ref{lem-rewrite-good-part} follows from a basic property of wave packets:
\begin{equation}\label{fast-decay-on-Q}\| ((\widehat{M_T \mu} \,d\sigma_R)*\psi)^\vee\|_{L^\infty(Q)}=\RD(R), \text{ given }2T\cap Q=\emptyset.\end{equation}
More precisely, by the definition of $M_T \mu$ in \eqref{def-M-T}, one can write out
$$((\widehat{M_T \mu} \,d\sigma_R)*\psi)^\vee(y)= \psi^\vee(y)\int\left(\int_{S^1}\int e^{2\pi i (y\cdot R\sigma-x\cdot R\sigma+x\cdot\xi)}\,\eta_T(x)dxd\sigma \right)  \psi_{i, \theta}(\xi) \hat{\mu}(\xi)\,d\xi,$$
then integration by parts on $x$ shows that it is $\RD(R)$ unless $\sigma$ lies in the $R^{-1/2}$-neighborhood of $\theta$, and finally, because the angle between $\theta$ and $l_{x,y}, \forall x\in T, \forall y\in Q$ is $>R^{-1/2+\delta}$, integration by parts on $\sigma$ under local coordinates concludes \eqref{fast-decay-on-Q}. We refer to \cite{Ste93}, Chapter VIII, for details of the second integration by parts, and \cite{Demeter20} as a reference for wave packets.
\end{proof}

For each $0\leq k\leq \log 1/\delta $, each $r_k/R$-cube $Q_k\in\D_{r_k/R}$, and each $r_{k-1}^{-1}$-cap $\theta_{k-1}$, let
\begin{equation}\label{def-F-k}F_{Q_k,\theta_{k-1}}:=
	F_{\theta_{k-1}}-\sum_{\theta_k\in\Theta_k(Q_k), \theta_k\subset\theta_{k-1}} F_{\theta_k}-\sum_{j=k+1}^{\log1/\delta-1} \sum_{\theta_j\in\Theta_{=j}(Q_k), \theta_j\subset\theta_{k-1}} F_{\theta_j},\end{equation}
with
$$\theta_{-1}:=S^1,\ \sum_{j=\log1/\delta}^{\log1/\delta-1}=\sum_{j=\log1/\delta+1}^{\log1/\delta-1}:=0,\ \Theta_{\log 1/\delta}:=\emptyset$$ as convention. In particular, when $k=0$, it becomes the main term in Lemma \ref{lem-rewrite-good-part}; when $k=\log 1/\delta$, it becomes $F_{\theta_{k-1}}$ for $R^{-1/2}$-caps $\theta_{k-1}$. We shall need the following relation between $F_{Q_k,\theta_{k-1}}$ and $F_{Q_{k+1},\theta_k}$.
\begin{lem}
	\label{lem-relation}
	Suppose $Q_k\subset Q_{k+1}$, $Q_k\in\D_{r_k/R}$, $Q_{k+1}\in\D_{r_{k+1}/R}$, then
	 $$F_{Q_k,\theta_{k-1}}(y)=\sum_{\theta_k\notin\Theta_k(Q_k),\,\theta_k\subset\theta_{k-1}} F_{Q_{k+1}, \theta_k}(y),\ \forall\,y\in\R^2.$$
\end{lem}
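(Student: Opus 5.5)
The identity is purely combinatorial. I would expand both sides using the definition \eqref{def-F-k}, use linearity of $\theta\mapsto F_\theta$ together with the nested (tree) structure of the caps $\theta_0\supset\theta_1\supset\cdots$, and match the terms index by index. Throughout I use that the caps at consecutive levels form a partition tree: any $r_j^{-1}$-cap and any $r_k^{-1}$-cap with $k\leq j$ are either nested or disjoint. As a result, $F_{\theta_{k-1}}=\sum_{\theta_k\subset\theta_{k-1}}F_{\theta_k}$ exactly, since $F_{\theta_j}=\sum_{\theta\subset\theta_j}F_\theta$ for the $R^{-1/2}$-caps $\theta$. No $\RD$ error appears; the identity holds pointwise for every $y$.

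\textbf{Step 1 (leading terms).} Expanding the right-hand side gives three groups of terms. The first is
\[
\sum_{\theta_k\notin\Theta_k(Q_k),\,\theta_k\subset\theta_{k-1}}F_{\theta_k}
=F_{\theta_{k-1}}-\sum_{\theta_k\in\Theta_k(Q_k),\,\theta_k\subset\theta_{k-1}}F_{\theta_k}.
\]
This matches the first two terms of $F_{Q_k,\theta_{k-1}}$.

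\textbf{Step 2 (level $j=k+1$).} The remaining right-hand terms at level $k+1$ are
\[
-\sum_{\theta_k\notin\Theta_k(Q_k),\,\theta_k\subset\theta_{k-1}}\;\sum_{\theta_{k+1}\in\Theta_{k+1}(Q_{k+1}),\,\theta_{k+1}\subset\theta_k}F_{\theta_{k+1}}.
\]
By \eqref{compare-Theta-j}, $\Theta_{k+1}(Q_{k+1})=\Theta_{k+1}(Q_k)$. By the tree structure, a cap $\theta_{k+1}\subset\theta_{k-1}$ is disjoint from every cap in $\Theta_k(Q_k)$ if and only if its parent $\theta_k$ is not in $\Theta_k(Q_k)$. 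So this double sum runs exactly over $\theta_{k+1}\in\Theta_{=k+1}(Q_k)$ with $\theta_{k+1}\subset\theta_{k-1}$, which matches the $j=k+1$ term on the left.

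\textbf{Step 3 (levels $j>k+1$).} Here the right-hand side contains
\[
-\sum_{\theta_k\notin\Theta_k(Q_k),\,\theta_k\subset\theta_{k-1}}\;\sum_{\theta_j\in\Theta_{=j}(Q_{k+1}),\,\theta_j\subset\theta_k}F_{\theta_j}.
\]
Again by \eqref{compare-Theta-j}, $\Theta_l(Q_{k+1})=\Theta_l(Q_k)$ for all $l\geq k+1$. Hence $\Theta_{=j}(Q_{k+1})$ consists of those $\theta_j\in\Theta_j(Q_k)$ that are disjoint from the caps in $\Theta_{k+1}(Q_k),\dots,\Theta_{j-1}(Q_k)$. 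Adding the constraint that the parent $\theta_k$ is not in $\Theta_k(Q_k)$ is, by the tree structure, the same as requiring disjointness from $\Theta_k(Q_k)$ as well. The double sum is therefore exactly over $\theta_j\in\Theta_{=j}(Q_k)$ with $\theta_j\subset\theta_{k-1}$. Each such $\theta_j$ is counted once, since its level-$k$ ancestor is unique.

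\textbf{Main point of care.} I expect the only delicate step to be the bookkeeping at the boundary cases. These are the conventions $\Theta_{\log1/\delta}=\emptyset$ and the empty sums when $k+1=\log1/\delta$ or $k=\log1/\delta-1$, together with the case $k=0$ with $\theta_{-1}=S^1$. I would check these directly against the stated conventions. I would also confirm that the equality $\Theta_l(Q_{k+1})=\Theta_l(Q_k)$ from \eqref{def-Theta-j-Q-j'} is used only for $l\geq k+1$, since for $l=k$ the set $\Theta_k$ genuinely depends on $Q_k$.
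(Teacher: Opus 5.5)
Your proposal is correct and follows essentially the same route as the paper. Both arguments split $F_{\theta_{k-1}}$ over its children $\theta_k$, use the nested cap structure to see that a cap is disjoint from all of $\Theta_k(Q_k)$ exactly when its level-$k$ ancestor lies outside $\Theta_k(Q_k)$, and match the remaining terms via $\Theta_l(Q_k)=\Theta_l(Q_{k+1})$ for $l\geq k+1$. The only differences are that you expand the right-hand side where the paper rewrites the left-hand side (immaterial), and your Step 2 correctly has $F_{\theta_{k+1}}$ where the paper's intermediate display has the typo $F_{\theta_k}$.
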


\begin{proof}[Proof of Lemma \ref{lem-relation}]
Notice that
\begin{equation}\label{rewrite-F-Q-k-theta-k-1-1}
\begin{aligned}F_{Q_k,\theta_{k-1}}=&\sum_{\theta_k\subset\theta_{k-1}}F_{\theta_k}-\sum_{\theta_k\in\Theta_k(Q_k), \,\theta_k\subset\theta_{k-1}} F_{\theta_k}-\sum_{\theta_k\subset\theta_{k-1}}\sum_{j=k+1}^{\log1/\delta-1} \sum_{\theta_j\in\Theta_{=j}(Q_k),\, \theta_j\subset\theta_k} F_{\theta_j}\\=&\sum_{\theta_k\notin\Theta_k(Q_k),\,\theta_k\subset\theta_{k-1}}F_{\theta_k}-\sum_{\theta_k\subset\theta_{k-1}}\sum_{j=k+1}^{\log1/\delta-1} \sum_{\theta_j\in\Theta_{=j}(Q_k), \theta_j\subset\theta_k} F_{\theta_j}.
\end{aligned}\end{equation}
By the definition of $\Theta_{=j}$ in \eqref{def-Theta-=j}, the conditions $\theta_j\in\Theta_{=j}(Q_k), \theta_j\subset\theta_k, j\geq k+1$ imply $\theta_k\notin\Theta_k(Q_k)$. Therefore \eqref{rewrite-F-Q-k-theta-k-1-1} equals
\begin{equation}\label{rewrite-F-Q-k-theta-k-1-2}\sum_{\theta_k\notin\Theta_k(Q_k),\,\theta_k\subset\theta_{k-1} }\left(F_{\theta_k}-\sum_{j=k+1}^{\log1/\delta-1} \sum_{\theta_j\in\Theta_{=j}(Q_k), \theta_j\subset\theta_k} F_{\theta_j}\right).\end{equation}
As the sum is taken over $\theta_k\notin\Theta_k(Q_k)$, no $\theta_{k+1}\subset\theta_k$ intersect caps in $\Theta_k(Q_k)$. So \eqref{rewrite-F-Q-k-theta-k-1-2} can be written as
$$\sum_{\theta_k\notin\Theta_k(Q_k),\,\theta_k\subset\theta_{k-1}}(F_{\theta_k}-\sum_{\theta_{k+1}\in\Theta_{k+1}(Q_k), \,\theta_{k+1}\subset\theta_k} F_{\theta_k}-\sum_{j=k+2}^{\log1/\delta-1} \sum_{\theta_j\in\Theta_{=j}(Q_k), \theta_j\subset\theta_k} F_{\theta_j}).$$
 Since $\Theta_j(Q_k)=\Theta_j(Q_{k+1})$ when $Q_k\subset Q_{k+1}$ and $j\geq k+1$ (recall \eqref{compare-Theta-j}), it coincides with
 $$\sum_{\theta_k\notin\Theta_k(Q_k),\,\theta_k\subset\theta_{k-1}}\left(F_{\theta_k}-\sum_{\theta_{k+1}\in\Theta_{k+1}(Q_{k+1}), \,\theta_{k+1}\subset\theta_k} F_{\theta_k}-\sum_{j=k+2}^{\log1/\delta-1} \sum_{\theta_j\in\Theta_{=j}(Q_{k+1}), \theta_j\subset\theta_k} F_{\theta_j}\right)$$
  $$=\sum_{\theta_k\notin\Theta_k(Q_k),\,\theta_k\subset\theta_{k-1}} F_{Q_{k+1}, \theta_k},$$
as desired.
\end{proof}

Now, by Lemma \ref{lem-rewrite-good-part} and notation \eqref{def-F-k}, the integral \eqref{L2-good-formulation} equals, up to a negligible error,
\begin{equation}\label{rewrite-L2-good-formulation}\sum_{Q_0\in\D_{r_0/R}}\nu(Q_0)\int |F_{Q_0,\theta_{-1}}(y)|^2\,d\nu_{Q_0}(y),\end{equation}
which is, by the standard localization argument,
\begin{equation}\label{localize-L2-good} \lesssim R^{C\delta}\sum_{Q_0\in\D_{r_0/R}}\nu(R^{2\delta^2}Q_0)\int |F_{Q_0, \theta_{-1}}(y)|^2\,dm_{R^{2\delta^2}Q_0}(y).\end{equation}
In this paper, the constant $C>0$ in the exponent of $R$ may vary from line to line but is independent in $\delta$ and $R$.

The following $L^2$ ball inflation lemma will eventually reduce the spatial scale from $r_0/R$ to $1$.
\begin{lem}[$L^2$ ball inflation]\label{lem-reduction}
For $0\leq k\leq \log 1/\delta-1 $, each $Q_{k+1}\in\D_{r_{k+1}/R}$, and each $r_{k-1}^{-1}$-cap $\theta_{k-1}$,
$$\sum_{Q_{k}\subset Q_{k+1}}\nu(R^{2(k+1)\delta^2}Q_{k})\int | F_{Q_k, \theta_{k-1}}(y)|^2\,dm_{R^{2(k+1)\delta^2}Q_{k}}(y) $$	
$$\lesssim R^{C\delta} (r_k/r_{k+1})^{\min\{t,1\}-1}\sum_{\theta_{k}\subset\theta_{k-1}}\nu(R^{2(k+2)\delta^2}Q_{k+1})\int |F_{Q_{k+1}, \theta_k}(y)|^2\,dm_{R^{2(k+2)\delta^2}Q_{k+1}}(y).$$
\end{lem}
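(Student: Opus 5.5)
The plan is to follow the heuristic argument of Section \ref{sec-proof-warmup}, with the weight $w$ replaced by $\nu$, in three moves: orthogonality, local constancy, and a tube count controlled by the definition of bad caps \eqref{def-bad-Q-r-j}. The spatial scale will go from $r_k/R$ up to $r_{k+1}/R$.

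\textbf{Step 1: orthogonality on each $Q_k$.} By Lemma \ref{lem-relation}, on each $Q_k$ we have
$$F_{Q_k,\theta_{k-1}}=\sum_{\theta_k\notin\Theta_k(Q_k),\,\theta_k\subset\theta_{k-1}}F_{Q_{k+1},\theta_k}.$$
Each $F_{Q_{k+1},\theta_k}$ is a combination of $F_\theta$ with $\theta\subset\theta_k$. Its Fourier support is therefore contained in an $R r_k^{-1}$-ball: the $1$-neighborhood of the arc $R\theta_k$ has diameter $\approx R/r_k$, and these balls have bounded overlap. We apply local $L^2$-orthogonality on the ball $R^{2(k+1)\delta^2}Q_k$, whose radius is $\gtrsim r_k/R$. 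This gives
$$\int|F_{Q_k,\theta_{k-1}}|^2dm_{R^{2(k+1)\delta^2}Q_k}\lesssim R^{C\delta^2}\sum_{\theta_k\subset\theta_{k-1},\,\theta_k\notin\Theta_k(Q_k)}\int|F_{Q_{k+1},\theta_k}|^2dm_{R^{(2k+2)\delta^2+\delta^2}Q_k}+\RD(R).$$
The extra $R^{\delta^2}$ enlargement is absorbed because the target lemma allows enlargements up to $R^{2(k+2)\delta^2}$. Retaining the restriction $\theta_k\notin\Theta_k(Q_k)$ is essential for Step 3.

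\textbf{Step 2: local constancy along tubes.} The Fourier support of $F_{Q_{k+1},\theta_k}$ lies in an $R r_k^{-1}\times R r_{k+1}^{-1}$ rectangle. Indeed, the arc $R\theta_k$ has length $R/r_k$ and sagitta $R/r_k^2\le R/r_{k+1}$, using $r_{k+1}=r_k^2$. So $|F_{Q_{k+1},\theta_k}|$ is essentially constant on $r_k/R\times r_{k+1}/R$ tubes $T$ parallel to $\theta_k$. We tile $R^{2(k+2)\delta^2}Q_{k+1}$ by such tubes and rearrange the sum over $Q_k\subset Q_{k+1}$ by tubes. The sum then becomes
$$\sum_T \nu\bigl(T\cap\{Q_k:\theta_k\notin\Theta_k(Q_k)\}\bigr)\,\|F_{Q_{k+1},\theta_k}\|^2_{L^\infty(T)},$$
with slightly enlarged tubes. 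Local constancy then gives
$$\|F\|_{L^\infty(T)}^2\lesssim R^{C\delta^2}\,\frac{|Q_{k+1}|}{|T|}\int_{R^{\delta^2}T}|F|^2\,dm_{R^{2(k+2)\delta^2}Q_{k+1}},$$
where $|Q_{k+1}|/|T|\approx r_{k+1}/r_k$.

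\textbf{Step 3: counting the $\nu$-mass of a good tube.} For a cube $Q_k$ inside a tube $T$ with $\theta_k\notin\Theta_k(Q_k)$, the tube $T_{r_k/R\times r_{k+1}/R}$ centered at $Q_k$ in direction $\theta_k$ fails \eqref{def-bad-Q-r-j}. Its $R^\delta$-dilate contains the relevant portion of $T$, which is where the $R^\delta$ fattening in \eqref{def-bad-Q-r-j} is used. Hence
$$\nu(\text{that part of }T)\le R^{10\delta}(r_k/r_{k+1})^{\min\{t,1\}}\,\nu(R^{2(k+2)\delta^2}Q_{k+1}).$$
If several non-adjacent pieces are involved, one good $Q_k$ per $R^\delta T$-piece suffices and costs only a bounded factor. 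Multiplying by $r_{k+1}/r_k$ from Step 2 gives the factor $R^{C\delta}(r_k/r_{k+1})^{\min\{t,1\}-1}$. The integrals over tubes then sum back, with bounded overlap, to $\int|F_{Q_{k+1},\theta_k}|^2dm_{R^{2(k+2)\delta^2}Q_{k+1}}$.

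\textbf{Main obstacle.} The delicate point is the bookkeeping in Steps 2–3. The good/bad status of $\theta_k$ varies with $Q_k$, so the restriction $\theta_k\notin\Theta_k(Q_k)$ must be carried through Step 1. After that, the $\nu$-mass on each tube can only be charged to good cubes. This has to be arranged while keeping all dilation factors ($R^{\delta}$ for the tube, $R^{2(k+1)\delta^2}$ versus $R^{2(k+2)\delta^2}$ for the cubes) compatible with the normalization $\nu_{R^{2(k+2)\delta^2}Q_{k+1}}$ appearing in \eqref{def-bad-Q-r-j}. I would first fix one good $Q_k$ inside each $R^{\delta}$-fattened tube and use it to bound the whole tube's mass, checking that the containments hold with the stated dilations. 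The Schwartz tails from localization give only $\RD(R)$ errors.
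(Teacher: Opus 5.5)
Your proposal is correct and follows the paper's proof essentially step for step. The paper also starts from Lemma \ref{lem-relation} and local $L^2$-orthogonality while keeping the restriction $\theta_k\notin\Theta_k(Q_k)$; it then regroups the cubes $Q_k$ along $r_k/R\times r_{k+1}/R$ tubes parallel to $\theta_k$, applies local constancy on those tubes at a cost of $r_{k+1}/r_k$, bounds each tube's $\nu$-mass by the goodness condition \eqref{def-bad-Q-r-j} at one counted $Q_k$ meeting it, and finally adds the bad tubes back. Your concern about ``non-adjacent pieces'' is unnecessary: one good $Q_k$ controls the whole dilated tube $R^{2(k+2)\delta^2}T$, because it lies inside the $R^{\delta}$-dilate of the tube centered at that $Q_k$, since $2(k+2)\delta^2<\delta$ for $k\leq\log 1/\delta$ and small $\delta$.
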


\begin{proof}
	[Proof of Lemma \ref{lem-reduction}]
	The idea is the same as the heuristic proof of Proposition \ref{thm-warmup} in Section \ref{sec-proof-warmup}, and the dependence on $Q_k\in \D_{r_k/R}$ will be reduced to $Q_{k+1}\in \D_{r_{k+1}/R}$ in a natural way. We shall give a detailed proof. I hope that the heuristic argument in the previous subsection helps for a better understanding of this idea.
	
By Lemma \ref{lem-relation},  for each $Q_k\in\D_{r_k/R}$,
$$\int | F_{Q_k, \theta_{k-1}}(y)|^2\,dm_{R^{2(k+1)\delta^2}Q_{k}}(y)=\int |\sum_{\theta_k\notin\Theta_k(Q_k),\,\theta_k\subset\theta_{k-1}} F_{Q_{k+1}, \theta_k}(y)|^2\,dm_{R^{2(k+1)\delta^2}Q_{k}}(y).$$
As the Fourier supports of $\{F_{Q_{k+1},\theta_k}\}$ lie in $r_k^{-1}$-caps of $RS^1$ with bounded overlapping, they are contained in $Rr_k^{-1}$-balls centered in $RS^1$ of bounded overlapping. So by the local $L^2$-orthogonality, for each $r_k/R$-cube $Q'\subset R^{2(k+1)\delta^2} Q_{k}$,
$$\int|\sum_{\theta_{k}\notin\Theta_{k}(Q_k), \,\theta_k\subset\theta_{k-1} } F_{Q_{k+1},\theta_k}|^2dm_{Q'}\lesssim R^{C\delta} \sum_{\theta_k\notin\Theta_k(Q_k),\, \theta_k\subset\theta_{k-1} }\int|F_{Q_{k+1},\theta_k}|^2\,dm_{R^{\delta^2} Q'},$$
up to an error term $\RD(R)\|F\|_{L^2}^2$. Since there are $\lesssim R^{C\delta^2}$ many such $Q'$, it follows that, up to a negligible error,
\begin{equation}\label{local-orthogonal}\begin{aligned}\int |F_{Q_k, \theta_{k-1}}|^2\,dm_{R^{2(k+1)\delta^2}Q_k}= &\int |\sum_{\theta_k\notin\Theta_k(Q_k),\,\theta_k\subset\theta_{k-1}} F_{Q_{k+1}, \theta_k}|^2\,dm_{R^{2(k+1)\delta^2}Q_{k}}\\\lesssim &\,  R^{C\delta}\sum_{\theta_k\notin \Theta_k(Q_k), \,\theta_k\subset\theta_{k-1}}\int | F_{Q_{k+1}, \theta_k}|^2\,dm_{R^{(2k+3)\delta^2}Q_k}.\end{aligned}\end{equation}

By taking the sum of \eqref{local-orthogonal} over $Q_k\subset Q_{k+1}$ with weight $\nu(R^{2(k+1)\delta^2}Q_k)$, one can see that the left hand side in Lemma \ref{lem-reduction} satisfies
\begin{equation}\label{the-sum}\begin{aligned}
& \sum_{Q_{k}\subset Q_{k+1}}\nu(R^{2(k+1)\delta^2}Q_{k})\int | F_{Q_k, \theta_{k-1}}|^2\,dm_{R^{2(k+1)\delta^2}Q_{k}}\\
\lesssim &\  R^{C\delta}\sum_{Q_k\subset Q_{k+1}}\sum_{\theta_k\notin \Theta_k(Q_k), \,\theta_k\subset\theta_{k-1}}\nu(R^{2(k+1)\delta^2}Q_k)\int | F_{Q_{k+1}, \theta_k}|^2\,dm_{R^{(2k+3)\delta^2}Q_k}.\end{aligned}\end{equation}

Now we fix $\theta_k\subset\theta_{k-1}$ and count $Q_k$. As $\theta_k\notin\Theta_k(Q_k)$, by the definition of $\Theta_k(Q_k)$ in \eqref{def-Theta-j-Q-j} and \eqref{def-bad-Q-r-j}, $Q_k$ is counted only if the $r_k/R\times r_{k+1}/R$ tube $T_{r_k/R\times r_{k+1}/R}$ centered at the center of $Q_k$ of direction $\theta_k$ is ``good", meaning
    \begin{equation}\label{good-k}\nu_{R^{2(k+2)\delta^2}Q_{k+1}}(R^\delta T_{r_k/R\times r_{k+1}/R})<R^{10\delta}\cdot (r_k/r_{k+1})^{\min\{t,1\}}.\end{equation}
So, if we cover $Q_{k+1}$ by $r_k/R\times r_{k+1}/R$ tubes $T$ parallel to $\theta_k$ of bounded overlapping, then \eqref{the-sum} is
\begin{equation}\label{rearrange-good-T}\lesssim R^{C\delta}\sum_{\theta_{k}\subset\theta_{k-1}}\sum_{\substack{T\parallel \theta_{k}\text{ with }\eqref{good-k}\\r_{k}/R\times r_{k+1}/R\text{-tubes} }}\sum_{Q_{k}\cap T\neq\emptyset}\nu(R^{2(k+1)\delta^2}Q_k)\int|F_{Q_{k+1},\theta_k}|^2\,dm_{R^{(2k+3)\delta^2}Q_{k}}.\end{equation}

For each $T$ in \eqref{rearrange-good-T}, cover $R^{(2k+3)\delta^2}T$ by $r_{k}/R\times r_{k+1}/R$ tubes $T'$ of the same direction. As the Fourier support of $F_{Q_{k+1},\theta_{k}}$ lies in a $R/r_{k}\times R/r_{k+1}$ rectangle containing $\theta_{k}$, it has locally constant property on each $T'$, and therefore
$$\begin{aligned}&\sum_{Q_{k}\cap T\neq\emptyset}\nu(R^{2(k+1)\delta^2}Q_{k})\int_{T'}|F_{Q_{k+1},\theta_{k}}|^2\,dm_{R^{(2k+3)\delta^2}Q_{k}}\\
\lesssim & R^{C\delta}\cdot \nu(R^{2(k+2)\delta^2} T)\cdot \|F_{Q_{k+1},\theta_k}\|_{L^\infty(T')}^2\\
= & R^{C\delta}\cdot \nu_{R^{2(k+2)\delta^2}Q_{k+1}}(R^{2(k+2)\delta^2} T) \cdot \nu(R^{2(k+2)\delta^2}Q_{k+1})\cdot \|F_{Q_{k+1},\theta_k}\|_{L^\infty(T')}^2\\
\lesssim & R^{C\delta}\cdot \nu_{R^{2(k+2)\delta^2}Q_{k+1}}(R^{\delta} T) \cdot \nu(R^{2(k+2)\delta^2}Q_{k+1})\cdot m(R^{\delta^2}T')^{-1}\int_{R^{\delta^2}T'}|F_{Q_{k+1},\theta_k}|^2\,dm,
\end{aligned}$$
which is, because $T$ satisfies \eqref{good-k} and $m(R^{\delta^2}T')\gtrsim R^{-C\delta}\cdot (r_k/r_{k+1})\cdot m(R^{2(k+2)\delta^2}Q_{k+1})$,
\begin{equation}\label{local-constant-r-0}
\lesssim R^{C\delta}\cdot(r_k/r_{k+1})^{\min\{t,1\}-1}\cdot \nu(R^{2(k+2)\delta^2}Q_{k+1})\int_{R^{\delta^2}T'}|F_{Q_{k+1},\theta_k}|^2\,dm_{R^{2(k+2)\delta^2}Q_{k+1}}.\end{equation}
Finally we take the sum of \eqref{local-constant-r-0} over $T'\subset R^{(2k+3)\delta^2}T$ and then the sum over $T, \theta_k$ in \eqref{rearrange-good-T} to obtain the following upper bound of the left hand side in Lemma \ref{lem-reduction}:
\begin{multline*}
R^{C\delta}(r_k/r_{k+1})^{\min\{t,1\}-1}\sum_{\theta_k\subset\theta_{k-1}}\nu(R^{2(k+2)\delta^2}Q_{k+1})\sum_{\substack{T\parallel \theta_k\text{ with }\eqref{good-k}\\r_k/R\times r_{k+1}/R\text{-tubes} }}\\\int_{R^{2(k+2)\delta^2}T}|F_{Q_{k+1},\theta_{k}}|^2\,dm_{R^{2(k+2)\delta^2}Q_{k+1}}.
\end{multline*}

Now we have finished all the work in the spatial scale $r_k/R$, can add those ``bad'' $r_k/R\times r_{k+1}/R$ tubes without \eqref{good-k} back to move to the next spatial scale $r_{k+1}/R$. More precisely, since the union of $r_k/R\times r_{k+1}/R$ tubes $T$ covers $Q_k$ of bounded overlapping, the above is
$$	\lesssim R^{C\delta}(r_k/r_{k+1})^{\min\{t,1\}-1}\sum_{\theta_k\subset\theta_{k-1}}\nu(R^{2(k+2)\delta^2}Q_{k+1})\int|F_{Q_{k+1},\theta_{k}}|^2\,dm_{R^{2(k+2)\delta^2}Q_{k+1}},$$
thus completes the proof of Lemma \ref{lem-reduction}.
\end{proof}

By \eqref{L2-good-formulation}, \eqref{rewrite-L2-good-formulation}, \eqref{localize-L2-good}, and iteration of Lemma \ref{lem-reduction}, we end up with
$$\begin{aligned} \int | \mu_{good,i, Q(y)} * \hat{\sigma_R} (y)|^2 d\nu(y)\lesssim &\,R^{C\delta\log1/\delta}\prod_{k=0}^{\log\delta-1}(r_k/r_{k+1})^{\min\{t,1\}-1}\sum_{\theta: R^{-1/2}\text{-caps}}\int_{B_{R^\delta}}|F_\theta|^2dm\\
\lesssim &\,R^{C\delta\log1/\delta} R^{1-\min\{t,1\}} \sum_{\theta: R^{-1/2}\text{-caps}}\int|F_\theta|^2dm.\end{aligned}$$

The rest is the standard ``Plancherel \& Cauchy-Schwarz'' argument:
$$\begin{aligned}\sum_{\theta: R^{-1/2}\text{-caps}}\int|F_\theta|^2dm &=\sum_{\theta: R^{-1/2}\text{-caps}} \int|((\psi_{i, \theta}\hat{\mu}\,d\sigma_R)*\psi)^\vee|^2\,dm\\
&= \sum_{\theta: R^{-1/2}\text{-caps}}\int|(\psi_{i, \theta}\hat{\mu}\,d\sigma_R)*\psi|^2dm\\
& \lesssim  R^{-1} \int_{S^1} \sum_{\theta: R^{-1/2}\text{-caps}}|(\psi_{i, \theta}\hat{\mu})(R\sigma)|^2\left(\int\psi(\xi-R\sigma)\,d\xi\right)d\sigma\\ &\lesssim  R^{-1}\int_{S^1}|\hat{\mu}(R\sigma)|^2\,d\sigma.\end{aligned}$$

Hence, 
$$\int | \mu_{good,i, Q(y)} * \hat{\sigma_R} (y)|^2 d\nu(y)\lesssim R^{C\delta\log1/\delta} R^{-\min\{t,1\}} \int_{S^1}|\hat{\mu}(R\sigma)|^2\,d\sigma,$$
and, after integrating over $R\approx R_i$, it follows that
$$\begin{aligned}\int \int_{r\approx R_i} | \mu_{good,y} * \hat{\sigma_r} (y)|^2 r dr\,d\nu(y)&\lesssim_\delta R_i^{C\delta\log1/\delta}R_i^{-\min\{t,1\}} \int_{|\xi|\approx R_i}|\hat{\mu}(\xi)|^2\,d\xi\\ &\lesssim_{\delta} R_i^{C\delta\log1/\delta}R_i^{2-s-\min\{t,1\}} I_{s}(\mu),\end{aligned}$$
summable in $i$ if $s>1$, $s+t>2$, and $\delta>0$ is small enough in terms of $s, t$.
This completes the proof of Proposition \ref{prop-our-L2-good}.

\section{Proof of the main Theorem}\label{sec-proof-main}
Now we can prove Theorem \ref{thm-main}. By the discussion in Section \ref{sec-prelim}, there exist a probability measure $\mu$ on $E$ of finite $s$-energy $I_s(\mu)$ for some $s>1$, and a probability measure $\nu$ on $F$ that is
   Frostman of dimension $t>2-s$ with $\overline{\dim}_M \supp\nu<t+\delta^2$. Then, by Proposition \ref{prop-our-L1-bad}, Proposition \ref{prop-our-L2-good}, and
$$\begin{aligned}1=&\int\left(\int_{\Delta_y(E)} d^y_*(\mu)\right)d\nu(y)\\\leq &\|d_*^y(\mu_{good, y})-d_*^y(\mu))\|_{L^1}+( \sup_{y\in\supp\nu}|\Delta_y(E)|^{\frac{1}{2}})\|d_*^y(\mu_{good, y})\|_{L^2},\end{aligned}$$
one can conclude that
$$|\Delta_y(E)|>0,\ \text{for some }y\in F,$$
that completes the proof of Theorem \ref{thm-main}.

\bibliographystyle{plainurl}
\bibliography{mybibtex.bib}

@misc{FP26+,
	archiveprefix = {arXiv},
	author = {Jonathan M. Fraser and Thang Pham},
	eprint = {2604.19486},
	primaryclass = {math.CA},
	title = {On Fourier decay and the distance set problem},
	url = {https://arxiv.org/abs/2604.19486},
	year = {2026}}

@article{AH76,
	author = {Agmon, S. and H\"ormander, L.},
	doi = {10.1007/BF02786703},
	fjournal = {Journal d'Analyse Math\'ematique},
	issn = {0021-7670,1565-8538},
	journal = {J. Analyse Math.},
	mrclass = {35B40},
	mrnumber = {466902},
	mrreviewer = {J.\ Muszy\'nski},
	pages = {1--38},
	title = {Asymptotic properties of solutions of differential equations with simple characteristics},
	url = {https://doi.org/10.1007/BF02786703},
	volume = {30},
	year = {1976}}

@article{FP15,
	author = {Fraser, Jonathan M. and Pollicott, Mark},
	doi = {10.1017/S0305004115000523},
	fjournal = {Mathematical Proceedings of the Cambridge Philosophical Society},
	issn = {0305-0041,1469-8064},
	journal = {Math. Proc. Cambridge Philos. Soc.},
	mrclass = {28A80 (28A78 37B50 37F35)},
	mrnumber = {3413889},
	mrreviewer = {Manuel\ Mor\'an},
	number = {3},
	pages = {547--566},
	title = {Micromeasure distributions and applications for conformally generated fractals},
	url = {https://doi.org/10.1017/S0305004115000523},
	volume = {159},
	year = {2015}}

@article{FFS15,
	author = {Ferguson, Andrew and Fraser, Jonathan M. and Sahlsten, Tuomas},
	doi = {10.1016/j.aim.2014.09.019},
	fjournal = {Advances in Mathematics},
	issn = {0001-8708,1090-2082},
	journal = {Adv. Math.},
	mrclass = {37C45 (28A80 28D05 37A30 37C40)},
	mrnumber = {3276605},
	mrreviewer = {Esa\ J\"arvenp\"a\"a},
	pages = {564--602},
	title = {Scaling scenery of {$(\times m,\times n)$} invariant measures},
	url = {https://doi.org/10.1016/j.aim.2014.09.019},
	volume = {268},
	year = {2015}}

@article{Barany17,
	author = {B\'ar\'any, Bal\'azs},
	doi = {10.4064/fm90-4-2016},
	fjournal = {Fundamenta Mathematicae},
	issn = {0016-2736,1730-6329},
	journal = {Fund. Math.},
	mrclass = {28A80 (28A78 37C45)},
	mrnumber = {3606399},
	mrreviewer = {Zhiyong\ Zhu},
	number = {1},
	pages = {83--100},
	title = {On some non-linear projections of self-similar sets in {$\Bbb{R}^3$}},
	url = {https://doi.org/10.4064/fm90-4-2016},
	volume = {237},
	year = {2017}}

@misc{Mulherkar25+,
	archiveprefix = {arXiv},
	author = {Siddharth Mulherkar},
	eprint = {2506.05624},
	primaryclass = {math.CA},
	title = {Random Constructions for Sharp Estimates of Mizohata-Takeuchi Type},
	url = {https://arxiv.org/abs/2506.05624},
	year = {2025}}

@misc{CLPY25+,
	archiveprefix = {arXiv},
	author = {Anthony Carbery and Zane Kun Li and Yixuan Pang and Po-Lam Yung},
	eprint = {2510.04345},
	primaryclass = {math.CA},
	title = {A weighted formulation of refined decoupling and inequalities of Mizohata-Takeuchi-type for the moment curve},
	url = {https://arxiv.org/abs/2510.04345},
	year = {2025}}

@article{CHV23,
	author = {Carbery, Anthony and H\"anninen, Timo S. and Valdimarsson, Stef\'an Ingi},
	doi = {10.2140/apde.2023.16.511},
	fjournal = {Analysis \& PDE},
	issn = {2157-5045,1948-206X},
	journal = {Anal. PDE},
	mrclass = {26D15 (42Bxx 46E30 47B38)},
	mrnumber = {4593773},
	number = {2},
	pages = {511--543},
	title = {Disentanglement, multilinear duality and factorisation for nonpositive operators},
	url = {https://doi.org/10.2140/apde.2023.16.511},
	volume = {16},
	year = {2023}}

@unpublished{Guth22,
	author = {Guth, Larry},
	note = {Joint talk for AIM Research Community `Fourier restriction conjecture and related problems' and HAPPY network.},
	title = {An enemy scenario in restriction theory},
	url = {https:// www.youtube.com/watch?v=x-DET83UjFg},
	year = {2022}}

@article{CIW24,
	author = {Carbery, Anthony and Iliopoulou, Marina and Wang, Hong},
	doi = {10.4171/rmi/1463},
	fjournal = {Revista Matem\'atica Iberoamericana},
	issn = {0213-2230,2235-0616},
	journal = {Rev. Mat. Iberoam.},
	mrclass = {42B37 (42B10)},
	mrnumber = {4759602},
	mrreviewer = {Ferit\ G\"urb\"uz},
	number = {4},
	pages = {1387--1418},
	title = {Some sharp inequalities of {M}izohata-{T}akeuchi-type},
	url = {https://doi.org/10.4171/rmi/1463},
	volume = {40},
	year = {2024}}

@article{CZ25+,
	archiveprefix = {arXiv},
	author = {Hannah Cairo and Ruixiang Zhang},
	eprint = {2512.08064},
	primaryclass = {math.CA},
	title = {Power loss for the {M}izohata-{T}akeuchi conjecture on {$C^k$} convex hypersurfaces},
	url = {https://arxiv.org/abs/2512.08064},
	year = {2025}}

@article{Cairo25+,
	archiveprefix = {arXiv},
	author = {Hannah Cairo},
	eprint = {2502.06137},
	primaryclass = {math.CA},
	title = {A Counterexample to the Mizohata-Takeuchi Conjecture},
	url = {https://arxiv.org/abs/2502.06137},
	year = {2025}}

@article{WZ22,
	author = {Wang, Zijian and Zheng, Jiqiang},
	doi = {10.4064/cm8632-10-2021},
	fjournal = {Colloquium Mathematicum},
	issn = {0010-1354,1730-6302},
	journal = {Colloq. Math.},
	mrclass = {28A75 (42B20)},
	mrnumber = {4491618},
	number = {2},
	pages = {171--191},
	title = {An improvement of the pinned distance set problem in even dimensions},
	url = {https://doi.org/10.4064/cm8632-10-2021},
	volume = {170},
	year = {2022}}

@article{DGOWWZ21,
	author = {Du, Xiumin and Guth, Larry and Ou, Yumeng and Wang, Hong and Wilson, Bobby and Zhang, Ruixiang},
	doi = {10.1353/ajm.2021.0005},
	fjournal = {American Journal of Mathematics},
	issn = {0002-9327,1080-6377},
	journal = {Amer. J. Math.},
	mrclass = {28A80 (28A75 42B10 42B20)},
	mrnumber = {4201782},
	mrreviewer = {Zolt\'an\ L.\ Buczolich},
	number = {1},
	pages = {175--211},
	title = {Weighted restriction estimates and application to {F}alconer distance set problem},
	url = {https://doi.org/10.1353/ajm.2021.0005},
	volume = {143},
	year = {2021}}

@article{Fraser23,
	author = {Fraser, Jonathan M.},
	doi = {10.1112/jlms.12697},
	fjournal = {Journal of the London Mathematical Society. Second Series},
	issn = {0024-6107,1469-7750},
	journal = {J. Lond. Math. Soc. (2)},
	mrclass = {28A80 (11B30 28A78)},
	mrnumber = {4549145},
	mrreviewer = {J\"org\ Neunh\"auserer},
	number = {2},
	pages = {777--797},
	title = {A nonlinear projection theorem for {A}ssouad dimension and applications},
	url = {https://doi.org/10.1112/jlms.12697},
	volume = {107},
	year = {2023}}

@article{Fraser18,
	author = {Fraser, Jonathan M.},
	doi = {10.1007/s11856-018-1715-z},
	fjournal = {Israel Journal of Mathematics},
	issn = {0021-2172,1565-8511},
	journal = {Israel J. Math.},
	mrclass = {28A80 (28A75)},
	mrnumber = {3819712},
	mrreviewer = {Lars\ Olsen},
	number = {2},
	pages = {851--875},
	title = {Distance sets, orthogonal projections and passing to weak tangents},
	url = {https://doi.org/10.1007/s11856-018-1715-z},
	volume = {226},
	year = {2018}}

@article{SW25,
	author = {Shmerkin, Pablo and Wang, Hong},
	doi = {10.1007/s00039-024-00696-5},
	fjournal = {Geometric and Functional Analysis},
	issn = {1016-443X,1420-8970},
	journal = {Geom. Funct. Anal.},
	mrclass = {28A78 (28A80)},
	mrnumber = {4865135},
	mrreviewer = {Lars\ Olsen},
	number = {1},
	pages = {283--358},
	title = {On the distance sets spanned by sets of dimension {$d/2$} in {$\Bbb R^d$}},
	url = {https://doi.org/10.1007/s00039-024-00696-5},
	volume = {35},
	year = {2025}}

@article{Stull22+,
	archiveprefix = {arXiv},
	author = {D. M. Stull},
	eprint = {2207.12501},
	primaryclass = {cs.CC},
	title = {Pinned Distance Sets Using Effective Dimension},
	url = {https://arxiv.org/abs/2207.12501},
	year = {2022}}

@article{FS23+,
	archiveprefix = {arXiv},
	author = {Jacob B. Fiedler and D. M. Stull},
	eprint = {2309.11701},
	primaryclass = {math.CA},
	title = {Dimension of Pinned Distance Sets for Semi-Regular Sets},
	url = {https://arxiv.org/abs/2309.11701},
	year = {2023}}

@article{FS25+,
	archiveprefix = {arXiv},
	author = {Jacob B. Fiedler and D. M. Stull},
	eprint = {2408.00889},
	primaryclass = {math.CA},
	title = {Pinned distances of planar sets with low dimension},
	url = {https://arxiv.org/abs/2408.00889},
	year = {2024}}

@article{Orponen12,
	author = {Orponen, Tuomas},
	doi = {10.1088/0951-7715/25/6/1919},
	fjournal = {Nonlinearity},
	issn = {0951-7715,1361-6544},
	journal = {Nonlinearity},
	mrclass = {28A80 (28A78 37C45)},
	mrnumber = {2929609},
	mrreviewer = {Xiang-Yang\ Wang},
	number = {6},
	pages = {1919--1929},
	title = {On the distance sets of self-similar sets},
	url = {https://doi.org/10.1088/0951-7715/25/6/1919},
	volume = {25},
	year = {2012}}

@article{DORZ23-2,
	archiveprefix = {arXiv},
	author = {Xiumin Du and Yumeng Ou and Kevin Ren and Ruixiang Zhang},
	eprint = {2309.04103},
	primaryclass = {math.CA},
	title = {New improvement to Falconer distance set problem in higher dimensions},
	url = {https://arxiv.org/abs/2309.04103},
	year = {2023}}

@article{DORZ23-1,
	archiveprefix = {arXiv},
	author = {Xiumin Du and Yumeng Ou and Kevin Ren and Ruixiang Zhang},
	eprint = {2309.04501},
	primaryclass = {math.CA},
	title = {Weighted refined decoupling estimates and application to Falconer distance set problem},
	url = {https://arxiv.org/abs/2309.04501},
	year = {2023}}

@book{Demeter20,
	author = {Demeter, Ciprian},
	doi = {10.1017/9781108584401},
	isbn = {978-1-108-49970-5},
	mrclass = {42-02 (11B30 11L07)},
	mrnumber = {3971577},
	mrreviewer = {Andreas\ Nilsson},
	pages = {xvi+331},
	publisher = {Cambridge University Press, Cambridge},
	series = {Cambridge Studies in Advanced Mathematics},
	title = {Fourier restriction, decoupling, and applications},
	url = {https://doi.org/10.1017/9781108584401},
	volume = {184},
	year = {2020}}

@article{Liu24,
	author = {Liu, Bochen},
	doi = {10.4171/rmi/1472},
	fjournal = {Revista Matem\'atica Iberoamericana},
	issn = {0213-2230,2235-0616},
	journal = {Rev. Mat. Iberoam.},
	mrclass = {28A75 (42B20)},
	mrnumber = {4739824},
	number = {3},
	pages = {827--858},
	title = {Mixed-norm of orthogonal projections and analytic interpolation on dimensions of measures},
	url = {https://doi.org/10.4171/rmi/1472},
	volume = {40},
	year = {2024}}

@article{DIOWZ20,
	author = {Du, Xiumin and Iosevich, Alex and Ou, Yumeng and Wang, Hong and Zhang, Ruixiang},
	doi = {10.1007/s00208-021-02170-1},
	fjournal = {Mathematische Annalen},
	issn = {0025-5831},
	journal = {Math. Ann.},
	mrclass = {28A80},
	mrnumber = {4297185},
	number = {3-4},
	pages = {1215--1231},
	title = {An improved result for {F}alconer's distance set problem in even dimensions},
	url = {https://doi.org/10.1007/s00208-021-02170-1},
	volume = {380},
	year = {2021}}

@article{Shm20,
	author = {Shmerkin, Pablo},
	journal = {J. Eur. Math. Soc. (2022), DOI:10.4171/JEMS/1283},
	title = {A nonlinear version of Bourgain's projection theorem}}

@article{KS18,
	author = {Keleti, Tam\'{a}s and Shmerkin, Pablo},
	doi = {10.1007/s00039-019-00500-9},
	fjournal = {Geometric and Functional Analysis},
	issn = {1016-443X},
	journal = {Geom. Funct. Anal.},
	mrclass = {28 (26A15 49Q15)},
	mrnumber = {4034924},
	number = {6},
	pages = {1886--1948},
	title = {New bounds on the dimensions of planar distance sets},
	url = {https://doi.org/10.1007/s00039-019-00500-9},
	volume = {29},
	year = {2019}}

@article{KT01,
	author = {Katz, Nets Hawk and Tao, Terence},
	fjournal = {New York Journal of Mathematics},
	issn = {1076-9803},
	journal = {New York J. Math.},
	mrclass = {28A80 (28A75 28A78)},
	mrnumber = {1856956},
	mrreviewer = {Miguel Angel Mart\'{\i}n},
	pages = {149--187},
	title = {Some connections between {F}alconer's distance set conjecture and sets of {F}urstenburg type},
	url = {http://nyjm.albany.edu:8000/j/2001/7_149.html},
	volume = {7},
	year = {2001}}

@article{Shm18,
	author = {Shmerkin, Pablo},
	doi = {10.4171/jfg/97},
	fjournal = {Journal of Fractal Geometry. Mathematics of Fractals and Related Topics},
	issn = {2308-1309},
	journal = {J. Fractal Geom.},
	mrclass = {28A75 (28A80 49Q15)},
	mrnumber = {4226184},
	number = {1},
	pages = {27--51},
	title = {Improved bounds for the dimensions of planar distance sets},
	url = {https://doi.org/10.4171/jfg/97},
	volume = {8},
	year = {2021}}

@article{Liu18-dimension,
	author = {Liu, Bochen},
	doi = {10.1090/proc/14740},
	fjournal = {Proceedings of the American Mathematical Society},
	issn = {0002-9939},
	journal = {Proc. Amer. Math. Soc.},
	mrclass = {28A75 (42B20)},
	mrnumber = {4042855},
	number = {1},
	pages = {333--341},
	title = {Hausdorff dimension of pinned distance sets and the {$L^2$}-method},
	url = {https://doi.org/10.1090/proc/14740},
	volume = {148},
	year = {2020}}

@article{Orp19,
	author = {Orponen, Tuomas},
	doi = {10.2140/apde.2019.12.1273},
	fjournal = {Analysis \& PDE},
	issn = {2157-5045},
	journal = {Anal. PDE},
	mrclass = {28A80 (28A78 42C10)},
	mrnumber = {3892404},
	number = {5},
	pages = {1273--1294},
	title = {On the dimension and smoothness of radial projections},
	url = {https://doi-org.easyaccess2.lib.cuhk.edu.hk/10.2140/apde.2019.12.1273},
	volume = {12},
	year = {2019}}

@article{GIOW18,
	author = {Guth, Larry and Iosevich, Alex and Ou, Yumeng and Wang, Hong},
	doi = {10.1007/s00222-019-00917-x},
	fjournal = {Inventiones Mathematicae},
	issn = {0020-9910},
	journal = {Invent. Math.},
	mrclass = {42B20 (28A80)},
	mrnumber = {4055179},
	number = {3},
	pages = {779--830},
	title = {On {F}alconer's distance set problem in the plane},
	url = {https://doi.org/10.1007/s00222-019-00917-x},
	volume = {219},
	year = {2020}}

@article{DZ18,
	author = {Du, Xiumin and Zhang, Ruixiang},
	doi = {10.4007/annals.2019.189.3.4},
	fjournal = {Annals of Mathematics. Second Series},
	issn = {0003-486X},
	journal = {Ann. of Math. (2)},
	mrclass = {42B20 (42B37)},
	mrnumber = {3961084},
	number = {3},
	pages = {837--861},
	title = {Sharp {$L^2$} estimates of the {S}chr\"{o}dinger maximal function in higher dimensions},
	url = {https://doi-org.easyaccess1.lib.cuhk.edu.hk/10.4007/annals.2019.189.3.4},
	volume = {189},
	year = {2019}}

@article{Liu18,
	author = {Liu, Bochen},
	doi = {10.1007/s00039-019-00482-8},
	fjournal = {Geometric and Functional Analysis},
	issn = {1016-443X},
	journal = {Geom. Funct. Anal.},
	mrclass = {Prelim},
	mrnumber = {3925111},
	number = {1},
	pages = {283--294},
	title = {An {$L^2$}-identity and pinned distance problem},
	url = {https://doi-org.easyaccess1.lib.cuhk.edu.hk/10.1007/s00039-019-00482-8},
	volume = {29},
	year = {2019}}

@article{shm17,
	author = {Shmerkin, Pablo},
	doi = {10.1007/s11856-019-1847-9},
	fjournal = {Israel Journal of Mathematics},
	issn = {0021-2172},
	journal = {Israel J. Math.},
	mrclass = {28A80 (28D20)},
	mrnumber = {3940442},
	number = {2},
	pages = {949--972},
	title = {On the {H}ausdorff dimension of pinned distance sets},
	url = {https://doi-org.easyaccess1.lib.cuhk.edu.hk/10.1007/s11856-019-1847-9},
	volume = {230},
	year = {2019}}

@book{Ste93,
	author = {Stein, Elias M.},
	isbn = {0-691-03216-5},
	mrclass = {42-02 (35Sxx 43-02 47G30)},
	mrnumber = {1232192},
	mrreviewer = {Michael Cowling},
	note = {With the assistance of Timothy S. Murphy, Monographs in Harmonic Analysis, III},
	pages = {xiv+695},
	publisher = {Princeton University Press, Princeton, NJ},
	series = {Princeton Mathematical Series},
	title = {Harmonic analysis: real-variable methods, orthogonality, and oscillatory integrals},
	volume = {43},
	year = {1993}}

@book{Mat95,
	author = {Mattila, Pertti},
	doi = {10.1017/CBO9780511623813},
	isbn = {0-521-46576-1; 0-521-65595-1},
	mrclass = {28A75 (49Q20)},
	mrnumber = {1333890},
	mrreviewer = {Harold Parks},
	pages = {xii+343},
	publisher = {Cambridge University Press, Cambridge},
	series = {Cambridge Studies in Advanced Mathematics},
	title = {Geometry of Sets and Measures in Euclidean Spaces: Fractals and Rectifiability},
	url = {http://dx.doi.org/10.1017/CBO9780511623813},
	volume = {44},
	year = {1995}}

@book{Fal86,
	author = {Falconer, K. J.},
	isbn = {0-521-25694-1; 0-521-33705-4},
	mrclass = {28-02 (28A99 60J65)},
	mrnumber = {867284},
	mrreviewer = {K. E. Hirst},
	pages = {xiv+162},
	publisher = {Cambridge University Press, Cambridge},
	series = {Cambridge Tracts in Mathematics},
	title = {The geometry of fractal sets},
	volume = {85},
	year = {1986}}

@article{Bou03,
	author = {Bourgain, Jean},
	coden = {GFANFB},
	doi = {10.1007/s000390300008},
	fjournal = {Geometric and Functional Analysis},
	issn = {1016-443X},
	journal = {Geom. Funct. Anal.},
	mrclass = {11K55 (28A80)},
	mrnumber = {1982147},
	mrreviewer = {Ben Joseph Green},
	number = {2},
	pages = {334--365},
	title = {On the {E}rd{\H o}s-{V}olkmann and {K}atz-{T}ao ring conjectures},
	url = {http://dx.doi.org/10.1007/s000390300008},
	volume = {13},
	year = {2003}}

@article{PS00,
	author = {Peres, Yuval and Schlag, Wilhelm},
	coden = {DUMJAO},
	doi = {10.1215/S0012-7094-00-10222-0},
	fjournal = {Duke Mathematical Journal},
	issn = {0012-7094},
	journal = {Duke Math. J.},
	mrclass = {42B25 (28A78)},
	mrnumber = {1749437},
	mrreviewer = {Esa A. J{\"a}rvenp{\"a}{\"a}},
	number = {2},
	pages = {193--251},
	title = {Smoothness of projections, {B}ernoulli convolutions, and the dimension of exceptions},
	url = {http://dx.doi.org/10.1215/S0012-7094-00-10222-0},
	volume = {102},
	year = {2000}}

@article{Orp17,
	author = {Orponen, Tuomas},
	doi = {10.1016/j.aim.2016.11.035},
	fjournal = {Advances in Mathematics},
	issn = {0001-8708},
	journal = {Adv. Math.},
	mrclass = {28A80 (28A78)},
	mrnumber = {3590535},
	pages = {1029--1045},
	title = {On the distance sets of {A}hlfors-{D}avid regular sets},
	url = {http://dx.doi.org/10.1016/j.aim.2016.11.035},
	volume = {307},
	year = {2017}}

@article{IL16,
	author = {Iosevich, Alex and Liu, Bochen},
	doi = {doi:10.5186/aasfm.2016.4135},
	fjournal = {Annales Academi\ae\ Scientiarum Fennic\ae . Mathematica},
	issn = {1239-629X},
	journal = {Ann. Acad. Sci. Fenn. Math.},
	mrclass = {28A75 (52C10)},
	mrnumber = {3525385},
	number = {2},
	pages = {579--585},
	title = {Falconer distance problem, additive energy and Cartesian products},
	volume = {41},
	year = {2016}}

@article{Mat87,
	author = {Mattila, Pertti},
	doi = {10.1112/S0025579300013462},
	fjournal = {Mathematika. A Journal of Pure and Applied Mathematics},
	issn = {0025-5793},
	journal = {Mathematika},
	mrclass = {42B10 (28A12)},
	mrnumber = {933500},
	mrreviewer = {Michael Cowling},
	number = {2},
	pages = {207--228},
	title = {Spherical averages of {F}ourier transforms of measures with finite energy; dimension of intersections and distance sets},
	url = {http://dx.doi.org/10.1112/S0025579300013462},
	volume = {34},
	year = {1987}}

@article{Fal85,
	author = {Falconer, K. J.},
	coden = {MTKAAB},
	doi = {10.1112/S0025579300010998},
	fjournal = {Mathematika. A Journal of Pure and Applied Mathematics},
	issn = {0025-5793},
	journal = {Mathematika},
	mrclass = {28A75 (28A05)},
	mrnumber = {834490},
	mrreviewer = {S. J. Taylor},
	number = {2},
	pages = {206--212},
	title = {On the {H}ausdorff dimensions of distance sets},
	url = {http://dx.doi.org/10.1112/S0025579300010998},
	volume = {32},
	year = {1985}}

@article{Bou94,
	author = {Bourgain, Jean},
	coden = {ISJMAP},
	doi = {10.1007/BF02772994},
	fjournal = {Israel Journal of Mathematics},
	issn = {0021-2172},
	journal = {Israel J. Math.},
	mrclass = {28A78},
	mrnumber = {1286826},
	mrreviewer = {K. E. Hirst},
	number = {1-3},
	pages = {193--201},
	title = {Hausdorff dimension and distance sets},
	url = {http://dx.doi.org/10.1007/BF02772994},
	volume = {87},
	year = {1994}}

@article{Wol99,
	author = {Wolff, Thomas},
	doi = {10.1155/S1073792899000288},
	fjournal = {International Mathematics Research Notices},
	issn = {1073-7928},
	journal = {Internat. Math. Res. Notices},
	mrclass = {42B10 (42B25)},
	mrnumber = {1692851},
	mrreviewer = {Steen Pedersen},
	number = {10},
	pages = {547--567},
	title = {Decay of circular means of {F}ourier transforms of measures},
	url = {http://dx.doi.org/10.1155/S1073792899000288},
	year = {1999}}

@article{Erd05,
	author = {Erdo\u{g}an, M. Burak},
	doi = {10.1155/IMRN.2005.1411},
	fjournal = {International Mathematics Research Notices},
	issn = {1073-7928},
	journal = {Int. Math. Res. Not.},
	mrclass = {42B20 (28A75)},
	mrnumber = {2152236},
	mrreviewer = {Herv{\'e} Pajot},
	number = {23},
	pages = {1411--1425},
	title = {A bilinear {F}ourier extension theorem and applications to the distance set problem},
	url = {http://dx.doi.org/10.1155/IMRN.2005.1411},
	year = {2005}}
\end{document}